\documentclass[reqno,11pt]{amsart}

\usepackage{youngtab}
\usepackage{hyperref,tikz,amssymb,enumerate}
\usepackage{xparse} 
\usepackage{eucal}
\usepackage{skak} 

\definecolor{color1}{HTML}{E87722} 
\definecolor{color2}{HTML}{009CDE} 
\DeclareMathAlphabet{\mathbfit}{OML}{cmm}{b}{it}  
\newcommand{\cbf}{\textbf}
\newcommand{\cbfit}[1]{\textcolor{color1}{\mathbfit{#1}}}

\evensidemargin20pt
\oddsidemargin20pt
\textwidth6in

{\theoremstyle{plain}
\newtheorem{theorem}{Theorem}[section]
\newtheorem*{theorem*}{Theorem}
\newtheorem{proposition}[theorem]{Proposition}
\newtheorem{corollary}[theorem]{Corollary}
}
{\theoremstyle{definition}
\newtheorem{definition}[theorem]{Definition}
\newtheorem*{definition*}{Definition}
\newtheorem{remark}[theorem]{Remark}
\newtheorem{example}[theorem]{Example}
}


\numberwithin{equation}{section}
\numberwithin{figure}{section}

\def\U{\mathsf{U}}
\def\D{\mathsf{D}}
\def\Dyck{\mathsf{Dyck}}
\def\Part{\mathsf{Part}}
\def\nmi{\mathsf{nmi}}

\newcommand{\syt}[1]{\mathrm{SYT}(#1)}
\newcommand{\oeis}[1]{\cite[\oeislink{#1}]{OEIS}}
\newcommand{\oeislink}[1]{\href{https://oeis.org/#1}{#1}}


\usetikzlibrary{shapes,arrows}

\tikzstyle{pathdefault}=[draw, solid, line width=0.8, color=black]
\tikzstyle{nodedefault}=[circle, inner sep=1, fill=black]
\tikzstyle{pathcolorlight}=[draw, line width=1, dotted, color=lightgray]
\tikzstyle{pathmarked}=[draw, solid, line width=1.2, color=color1]
\tikzstyle{nodemarked}=[circle, inner sep=1.5, fill=color1]

\tikzstyle{line} = [draw, -latex']
    
\newcounter{id}
\newcommand{\drawlinedotswithstyle}[4]{
 \def\x{{#3}};
 \def\y{{#4}};
 \tikzstyle{thispathstyle}=[#1]
 \tikzstyle{thisnodestyle}=[#2]
 \setcounter{id}{-1}; 
 \foreach \j in {#3}{\stepcounter{id}} 
 \foreach \i in {1,...,\the\value{id}}{  
  \path[thispathstyle] (\x[\i],\y[\i]) --(\x[\i-1],\y[\i-1]); 
 }
 \foreach \i in {1,...,\the\value{id}}{  
  \node[thisnodestyle] at (\x[\i],\y[\i]) {}; 
 }
 \node[thisnodestyle] at (\x[0],\y[0]) {}; 
}
\DeclareDocumentCommand{\drawlinedots}{ O{pathdefault} O{nodedefault} m m}{\drawlinedotswithstyle{#1}{#2}{#3}{#4}} 


\title[From Dyck paths to standard Young tableaux]{From Dyck paths to standard Young tableaux}

\author[Gil]{Juan B. Gil}
\address{Penn State Altoona, 3000 Ivyside Park, Altoona, PA 16601, USA} 
\email{jgil@psu.edu}

\author[McNamara]{Peter R.~W. McNamara}
\address{Department of Mathematics, Bucknell University, Lewisburg, PA 17837, USA}
\email{peter.mcnamara@bucknell.edu}

\author[Tirrell]{Jordan O. Tirrell}
\address{Department of Mathematics and Statistics, Mount Holyoke College, South Hadley, MA 01075, USA}
\curraddr{Department of Mathematics and Computer Science, Washington College, Chestertown, MD 21620, USA}
\email{jtirrell2@washcoll.edu}

\author[Weiner]{Michael D. Weiner}
\address{Penn State Altoona, 3000 Ivyside Park, Altoona, PA 16601, USA} 
\email{mdw8@psu.edu}

\thanks{Peter McNamara was partially supported by grant \#245597 from the Simons Foundation.}

\subjclass{05A19 (Primary); 05A05 (Secondary)}
\keywords{Dyck path, standard Young tableau, partial matching, increasing Young tableau}

\begin{document}

\begin{abstract}
We present nine bijections between classes of Dyck paths and classes of standard Young tableaux (SYT). In particular, we consider SYT of flag and rectangular shapes, we give Dyck path descriptions for certain SYT of height at most 3, and we introduce a special class of labeled Dyck paths of semilength $n$ that is shown to be in bijection with the set of all SYT with $n$ boxes. In addition, we present bijections from certain classes of Motzkin paths to SYT. As a natural framework for some of our bijections, we introduce a class of set partitions which in some sense is dual to the known class of noncrossing partitions. 
\end{abstract}

\maketitle

\section{Introduction}

Dyck paths and standard Young tableaux (SYT) are two of the most central sets in combinatorics. Dyck paths of semilength $n$ are perhaps the best-known family counted by the Catalan number $C_n$, while SYT, beyond their beautiful definition, are one of the building blocks for the rich combinatorial landscape of symmetric functions.  

Despite their very different definitions, there are interesting connections between Dyck paths and SYT, led by the elegant bijection between Dyck paths of semilength $n$ and SYT of shape $(n,n)$: for $1 \leq j \leq 2n$, if the $j$th step in the Dyck paths is an up-step (resp.\ down-step), then put the entry $j$ in the first (resp.\ second) row of the SYT. We propose that this bijection is just the tip of the iceberg by establishing a large number of other bijections between subsets of Dyck paths and subsets of SYT, as we elaborate below.    

First, though, let us mention some results of this type from the literature. Pechenik \cite{Pec14} generalized the above bijection to the sets of small Schr\"oder paths (steps of (2,0) allowed) and increasing tableaux (as defined in Remark~\ref{rem:pechenik} below). In a different direction, Regev \cite{Reg81} showed that the number of Motkzin paths (steps of (1,0) allowed) of length $n$ equals the number of SYT with $n$ boxes and at most three rows, a result that was later proved bijectively by Eu \cite{Eu10}. This result was generalized to SYT with at most $2d+1$ rows for any $d\geq1$ by Eu et al.~\cite{E+13}. In \cite{Gud10}, Gudmundsson showed bijectively that for $d = k + p$, the class of Dyck paths of semilength $n$ that begin with at least $k$ successive up-steps, end with at least $p$ successive down-steps, and touch the $x$-axis at least once somewhere between the endpoints, is equinumerous with the class of SYT of shape $(n, n-d)$. Most recently, Garsia and Xin \cite{GX18+} gave a bijection between rational Dyck paths and a particular class of rectangular standard Young tableaux.

Expanding on these results, in this paper we present 10 bijections from classes of Dyck and Motzkin paths to classes of SYT. In particular, we look at SYT of hook, flag, and rectangular shape, and we introduce an interesting class of labeled Dyck paths of semilength $n$ that is shown to be equinumerous with the set of SYT with $n$ boxes. Some of the bijections discussed here are basic, and some are minor variations of known bijections. Nonetheless, we include them all to illustrate our approach and to provide a broader picture of how these combinatorial families interact.

Our first group of bijections share a common step, which is a bijective map $\varphi$ from Dyck paths to a class of set partitions that we call \emph{nomincreasing partitions}. We say that a set partition of $[n]=\{1,\dots,n\}$ is nomincreasing if, when written in standard form, the elements which are not minimal in their block form an increasing sequence. For example, the partition $1237|48|5|69$ is nomincreasing because $23789$ is increasing. On the other hand, the partition $1239|48|5|67$ is not nomincreasing since $23987$ is not increasing. The definition of $\varphi$ together with some properties of nomincreasing partitions will appear at the beginning of Section~\ref{sec:flag_shape}, followed by our first three bijections: 

\begin{enumerate}[\quad(1)]
\itemsep3pt
\item As a basic example of $\varphi$ in action, we give a bijection (Prop.~\ref{prop:hook_peaks}) from Dyck paths of semilength $n$ with $k$ peaks and $k$ returns to SYT of hook shape $(k, 1^{n-k})$. Clearly, both of these sets have cardinality $\binom{n-1}{k-1}$.  
\item\label{ite:flag} Much more interestingly, we give a bijection (Prop.~\ref{prop:flags_peaks}) from Dyck paths of semilength $n$ with $k$ peaks and no singletons to SYT of \emph{flag} shape $(k,k,1^{n-2k})$. A \emph{singleton} in a Dyck path is an ascent of length 1. 
\item Using a result from \cite{BGMW16}, the SYT of flag shape $(k,k,1^{n-k})$ are equinumerous  with Dyck paths of semilength $2n$ with $k$ peaks and all ascents of even length such that an ascent of length $2j$ is followed immediately by a descent of length at least $j$.  This result (Prop.~\ref{prop:flags_peaks2}) is proved using our bijection from~\eqref{ite:flag} above.
\end{enumerate}

In Section~\ref{sec:height3}, we consider tableaux with at most three rows and present the bijections numbered \eqref{ite:three_rows}--\eqref{ite:schroder} below. The first two bijections make use of the map $\varphi$, while the last two use modified versions of the classical bijection from Dyck paths to SYT of shape $(n,n)$.

\begin{enumerate}[\quad(1)]
\itemsep3pt
\setcounter{enumi}{3}
\item\label{ite:three_rows} We give a new bijective proof (Prop.~\ref{prop:height3}) that the number of Dyck paths of semilength $n$ that avoid three consecutive up-steps equals the number of SYT with $n$ boxes and at most $3$ rows. 
In addition, this bijection maps Dyck paths with $s$ singletons to SYT with $s$ columns of odd length. 
\item As a special case (Rem.\ \ref{rem:nnn}) of the previous bijection, we get that SYT of shape $(n,n,n)$ correspond to Dyck paths of semilength $3n$ that avoid three consecutive up-steps, have exactly $n$ singletons, end with $\U^2\D^\ell$ for some $\ell\ge 2$, and such that every subpath starting at the origin has at least as many 1-ascents as 2-ascents.  Here and elsewhere, $\U$ (resp.\ $\D$) denotes an up-step (resp.\ down-step).
\item We already mentioned the result of Gudmundsson involving SYT of shape $(n, n-d)$.  We show (Prop.~\ref{prop:n_n-d}) that 
for $0\le d \le n$, SYT of shape $(n, n-d)$ are in bijection with Dyck paths of semilength $n+1$ having exactly $d+1$ returns.
\item\label{ite:schroder}
The number of tableaux of shape $(n,n)$ with label set $\{1,\ldots,2n-k\}$ such that the rows are strictly increasing and the columns are weakly (resp.\ strictly) increasing are known to be enumerated by the large Schr\"oder numbers \oeis{A006318} (resp.\ small Schr\"oder numbers \oeis{A001003}).  We show (Prop.~\ref{prop:Schr_marked}) that these are in bijection with the number of Dyck paths of semilength $n$ with $k$ marked peaks (resp.\ valleys).
\end{enumerate}

In Section~\ref{sec:nxpt}, we present a more elaborate variation of Dyck paths.  In \cite{AsMa08}, Asinowski and Mansour consider Dyck paths whose $k$-ascents are themselves ``colored'' by Dyck paths of length $2k$, for all $k$.  We consider labels on the ascents of a similar flavor in that we color the ascents with \emph{connected matchings}, an example of which is shown in Fig.~\ref{fig:cm-labeled_dyck}. We call such Dyck paths \emph{cm-labeled Dyck paths}.

\begin{enumerate}[\quad(1)]
\itemsep3pt
\setcounter{enumi}{7}
\item The number of cm-labeled Dyck paths of semilength $n$ with $s$ singletons and $k$-noncrossing labels equals the number of SYT with $n$ boxes, $s$ columns of odd length, and at most $2k-1$ rows.  This bijection relies heavily on a bijection of Burrill et al.~\cite{BCFMM16}.  As a corollary (Cor.~\ref{cor:dyck_syt}), we get that the number of cm-labeled Dyck paths of semilength $n$ equals the number of SYT with $n$ boxes.
\end{enumerate}

Generalizing the above class of SYT of shape $(n,n,n)$, in Section~\ref{sec:rectangular} we consider SYT of shape $(n^d)$ and use a result of Wettstein \cite{Wett17} to connect them with Dyck paths of semilength $d\cdot n$ whose ascents are labeled by certain balanced bracket expressions over an alphabet with $d$ letters.

\begin{enumerate}[\quad(1)]
\itemsep3pt
\setcounter{enumi}{8}
\item The set of SYT of shape $(n^d)$ is in bijection with the set of Dyck paths of semilength $d\cdot n$ created from strings of the form $\D$ and $\U^{d\cdot j}\D$ for $j=1,\ldots, n$, and such that each $d\!\cdot\!j$-ascent may be labeled in $p_j$ different ways, where $(p_n)$ is the sequence of $d$-dimensional prime Catalan numbers.
\end{enumerate}

Given Corollary~\ref{cor:dyck_syt}, it is natural to ask for other sets of paths that are in bijection with the \emph{full set} of SYT with $n$ boxes. In Section~\ref{sec:Motzkin}, we present three classes of Motzkin paths with such a bijection.  See Proposition~\ref{prop:MotzkinPaths} for the definitions. In each case, the number of flat steps $s$ in the Motzkin path equals the number of columns of odd length in the tableau.

\begin{enumerate}[\quad(1)]
\itemsep3pt
\setcounter{enumi}{9}
\item The following classes of Motzkin paths with $n$ steps are in bijection with the set of SYT with $n$ boxes.
  \begin{enumerate}
  \item Height-labeled Motzkin paths, whose bijection to SYT is somewhat well known.
  \item Full rook Motzkin paths. In the case of Dyck paths ($s=0$), see \cite{Cal09,Fra78,FV79,Kratt06}.
  \item Yamanouchi-colored Motzkin paths, for which a different bijection to the one we use is given by Eu et al.~\cite{E+13}.
  \end{enumerate}
\end{enumerate}

We conclude in Section~\ref{sec:further_remarks} with several remarks. We put some of our bijections in a more general framework of maps between Dyck paths and restricted set partitions, thereby explaining the assertion in the abstract about the duality between nomincreasing and noncrossing partitions. After a brief discussion of the noncrossing partition transform, we obtain an elegant expression for the generating function for SYT of height at most $2k-1$ in terms of the generating function for $k$-noncrossing perfect matchings.  

\section{Dyck paths to SYT of hook and flag shape} 
\label{sec:flag_shape}

In this section, we will discuss several bijections between Dyck paths with certain restrictions and SYT of special shapes.  We start by defining a bijective map from the set of Dyck paths of semilength $n$ to the set of nomincreasing partitions of $[n]$ that serves as a unifying feature of several of these bijections. Specifically, we let
\begin{equation}\label{map_phi}
 \varphi: \Dyck(n) \to \Part_\nmi(n)
\end{equation}
be defined as follows:
\begin{itemize}
\item From left to right, number the down-steps of the Dyck path with $[n]$ in increasing order. 
\item At each peak $\U\D$, label the up-step with the number already assigned to its paired down-step.
\item Going through the ascents from left to right, label the remaining up-steps from top to bottom on each ascent in a greedy fashion.
\item The resulting labeling gives a nomincreasing partition of $[n]$ whose blocks are the labels on the ascents.
\end{itemize}
For example, the path in Fig.~\ref{fig:peak-greedy} gives the partition $1237|48|5|69$.  As in Fig.~\ref{fig:peak-greedy}, we will represent such a partition by a tableau-like array where the column entries are increasing from top to bottom and give the blocks of the partition while the top row is also increasing and contains the smallest entry from each block; when such an array comes from a nomincreasing partition, we call it a \emph{modified tableau}.  

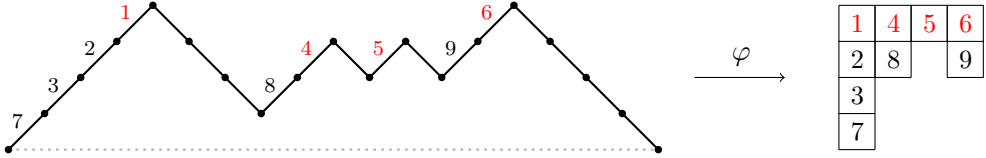
\begin{figure}[ht]
\begin{tikzpicture}[scale=0.4]
\begin{scope}
\scriptsize
	\draw[color1] (3.7,3.8) node[left=1pt] {\cbf{1}};
	\draw[color1] (8.7,2.8) node[left=1pt] {\cbf{4}};
	\draw[color1] (10.7,2.8) node[left=1pt] {\cbf{5}};
	\draw[color1] (13.7,3.8) node[left=1pt] {\cbf{6}};
	\draw (2.7,2.8) node[left=1pt] {2};
	\draw (1.7,1.8) node[left=1pt] {3};
	\draw (0.7,0.8) node[left=1pt] {7};
	\draw (7.7,1.8) node[left=1pt] {8};
	\draw (12.7,2.8) node[left=1pt] {9};
\draw[pathcolorlight] (0,0) -- (18,0);
\drawlinedots{0,1,2,3,4,5,6,7,8,9,10,11,12,13,14,15,16,17,18}%
{0,1,2,3,4,3,2,1,2,3,2,3,2,3,4,3,2,1,0}
\end{scope}

\draw[->] (18.8,2) -- (21.3,2);
\draw (20.2,2) node[above=0.5pt] {$\varphi$};

\begin{scope}[xshift=225mm]
\small
\draw[step=1] (0,0) grid (1,4);
\draw[step=1] (1,2) grid (2,4);
\draw (2,3) rectangle (3,4);
\draw[step=1] (3,2) grid (4,4);
\draw[color1] (0,3.5) node[right=0.5pt] {\cbf{1}};  
\draw (0,2.5) node[right=0.5pt] {2};  
\draw (0,1.5) node[right=0.5pt] {3};  
\draw (0,0.5) node[right=0.5pt] {7};  
\draw[color1] (1,3.5) node[right=0.5pt] {\cbf{4}};  
\draw (1,2.5) node[right=0.5pt] {8};  
\draw[color1] (2,3.5) node[right=0.5pt] {\cbf{5}};  
\draw[color1] (3,3.5) node[right=0.5pt] {\cbf{6}};  
\draw (3,2.5) node[right=0.5pt] {9};  
\end{scope}
\end{tikzpicture}
\caption{Example of the map $\varphi$, where the columns of the array on the right are the blocks of the resulting nomincreasing partition.}
\label{fig:peak-greedy}
\end{figure}

Note that the difference of the smallest entries in two consecutive blocks is the number of down-steps between the corresponding ascents on the path. 

Finally, given a partition $B_1|\cdots|B_\ell$ of $[n]$ with $1=b_1<\cdots<b_\ell$ where $b_i=\min(B_i)$, the reverse map is exactly
\begin{equation}\label{eq:Dyck_from_Part}
\U^{\#B_1}\D^{b_2-b_1}\U^{\#B_2}\D^{b_3-b_2}\cdots\U^{\#B_{\ell-1}}\D^{b_\ell-b_{\ell-1}}\U^{\#B_\ell}\D^{n+1-b_\ell}.
\end{equation} We will modify $\varphi$ to obtain bijections between certain Dyck paths and SYT.

\subsection*{Hook shapes}

We begin with a ``warm up'' example for the use of $\varphi$. An SYT is said to be of \emph{hook shape} if its shape is $(k,1^\ell)$ for some $k$ and $\ell$, where $1^\ell$ denotes a sequence consisting of $\ell$ copies of 1. A Dyck path of semilength $n$ with $k$ peaks and $k$ returns is a Dyck path of the form $\U^{j_1}\D^{j_1} \cdots \U^{j_k}\D^{j_k}$ with $j_1+\dots+j_k=n$. There are $\binom{n-1}{k-1}$ such paths, which is also the number of SYT of shape $(k,1^{n-k})$. To demonstrate the use of $\varphi$, we give a bijective proof of this equinumerosity.

\begin{proposition}\label{prop:hook_peaks}
For $1 \leq k \leq n$, Dyck paths of semilength $n$ with $k$ peaks and $k$ returns are in bijection with SYT of shape $(k,1^{n-k})$.
\end{proposition}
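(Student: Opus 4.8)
The plan is to realize both sides as indexed by the same combinatorial datum---a composition of $n$ into $k$ positive parts---and to use $\varphi$ to make the correspondence transparent. First I would recall that a Dyck path of semilength $n$ with $k$ peaks and $k$ returns has the form $\U^{j_1}\D^{j_1}\cdots\U^{j_k}\D^{j_k}$ with each $j_i\ge 1$ and $j_1+\cdots+j_k=n$, so it is determined by the composition $(j_1,\ldots,j_k)$, equivalently by its partial sums $c_i=j_1+\cdots+j_i$ (with $c_0=0$).

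Next I would compute $\varphi$ on such a path. Since the descents occur in the blocks $\D^{j_1},\ldots,\D^{j_k}$, the down-steps of the $i$th descent receive the labels $c_{i-1}+1,\ldots,c_i$. In each factor $\U^{j_i}\D^{j_i}$ the top (peak) up-step is matched with the \emph{first} down-step of the $i$th descent, so it is labeled $b_i:=c_{i-1}+1$; these are precisely the block minima, and $b_1=1<b_2<\cdots<b_k$. Running the greedy step left to right and top to bottom then forces the $i$th ascent to receive exactly the consecutive labels $c_{i-1}+1,\ldots,c_i$, so the $i$th block of $\varphi(\text{path})$ is the interval $\{c_{i-1}+1,\ldots,c_i\}$. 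I expect this bookkeeping---tracking which label lands on the peak and checking that the greedy assignment never reaches the next minimum $b_{i+1}$---to be the only place requiring care, and so the main (if mild) obstacle.

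With $\varphi$ in hand, I would define the image SYT of shape $(k,1^{n-k})$ by placing the block minima $b_1<\cdots<b_k$ (the top row of the modified tableau) along the first row, and the remaining elements of $[n]\setminus\{b_1,\ldots,b_k\}$, in increasing order, down the first column. Because $b_1=1$ this fills the corner with $1$; both the row and the column are strictly increasing; and together the two arms use each element of $[n]$ exactly once. Hence the result is a genuine hook SYT.

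Finally I would verify bijectivity by exhibiting the inverse. A hook SYT of shape $(k,1^{n-k})$ with first row $1=a_1<\cdots<a_k$ has its first column forced to be the increasing arrangement of the complementary set, so it is determined by the $(k-1)$-subset $\{a_2,\ldots,a_k\}\subseteq\{2,\ldots,n\}$. Setting $b_i=a_i$, reconstructing the interval partition with blocks $\{b_i,\ldots,b_{i+1}-1\}$ (where $b_{k+1}:=n+1$), and applying the reverse formula \eqref{eq:Dyck_from_Part} returns the path. Since both families are parametrized by the same $(k-1)$-subset of $\{2,\ldots,n\}$, the two maps are mutually inverse, giving the claimed bijection and re-deriving the common count $\binom{n-1}{k-1}$.
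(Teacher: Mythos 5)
Your proof is correct and follows essentially the same route as the paper: apply $\varphi$ to the path $\U^{j_1}\D^{j_1}\cdots\U^{j_k}\D^{j_k}$ to get the interval partition with blocks $\{c_{i-1}+1,\ldots,c_i\}$, form the hook SYT by placing the block minima in the first row and the remaining entries down the first column, and invert by reconstructing the interval blocks from the first-row entries and applying $\varphi^{-1}$ (equivalently, the formula \eqref{eq:Dyck_from_Part}). Your added bookkeeping---verifying the greedy labeling explicitly and parametrizing both sides by $(k-1)$-subsets of $\{2,\ldots,n\}$---only makes explicit what the paper leaves implicit.
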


\begin{proof}
Given a Dyck path of the form $\U^{j_1}\D^{j_1} \cdots \U^{j_k}\D^{j_k}$ with $j_1+\dots+j_k=n$, we apply the map $\varphi$ to get the partition 
\[ 1,\dots, j_1\mid j_1+1,\dots, j_1+j_2\mid \cdots\mid n-j_k+1,\dots,n, \] 
which can be represented as a modified tableau. We then obtain an SYT of hook shape by pushing all the boxes below the first row into the first column.  For example, 

\begin{center}
\begin{tikzpicture}[scale=0.4]
\begin{scope}
\draw[pathcolorlight] (0,0) -- (12,0);
\drawlinedots{0,1,2,3,4,5,6,7,8,9,10,11,12}{0,1,2,1,0,1,0,1,2,3,2,1,0}
\draw[->] (12.8,1.5) -- (14.2,1.5);
\draw (13.5,1.4) node[above=1pt] {\small $\varphi$};
\end{scope}

\begin{scope}[xshift=152mm]
\small
\draw[step=1] (0,2) grid (3,3);
\draw (0,1) rectangle (1,2);
\draw[step=1] (2,0) grid (3,2);
\draw (0,2.5) node[right=0.5pt] {1};  
\draw (0,1.5) node[right=0.5pt] {2};  
\draw (1,2.5) node[right=0.5pt] {3};  
\draw (2,2.5) node[right=0.5pt] {4};  
\draw (2,1.5) node[right=0.5pt] {5};  
\draw (2,0.5) node[right=0.5pt] {6};  
\draw[->] (3.8,1.5) -- (5,1.5);
\end{scope}

\begin{scope}[xshift=210mm, yshift=-0.5cm]
\small
\draw[step=1] (0,0) grid (1,4);
\draw[step=1] (1,3) grid (3,4);
\draw (0,3.5) node[right=0.5pt] {1};  
\draw (0,2.5) node[right=0.5pt] {2};  
\draw (0,1.5) node[right=0.5pt] {5};  
\draw (0,0.5) node[right=0.5pt] {6};  
\draw (1,3.5) node[right=0.5pt] {3};  
\draw (2,3.5) node[right=0.5pt] {4};  
\draw (4,2) node {.};
\end{scope}
\end{tikzpicture}
\end{center}

For the inverse, let $a_1,a_2,\dots,a_k$ be the entries of the first row of a given SYT of shape $(k,1^{n-k})$. Move the boxes that appear below the first row to the unique place such that the modified tableau $T$ has columns with increasing consecutive entries. The length of column $i$ in $T$ is then the length of the $i$th ascent (from left to right) on the Dyck path, which uniquely determines a Dyck path with $k$ peaks and $k$ returns.
\end{proof}

\begin{corollary}
The number of Dyck paths of semilength $n$ with as many peaks as returns equals the number of SYT of hook shape with $n$ boxes.
\end{corollary}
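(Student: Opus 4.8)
The plan is to recognize this corollary as the ``sum over $k$'' of Proposition~\ref{prop:hook_peaks}. The hook shapes with $n$ boxes are exactly the shapes $(k,1^{n-k})$ for $1\le k\le n$, so the set of hook SYT with $n$ boxes is the disjoint union over $k$ of the SYT of shape $(k,1^{n-k})$. It therefore suffices to show that the Dyck paths of semilength $n$ with as many peaks as returns decompose in a matching way, namely as the disjoint union over $k$ of those having exactly $k$ peaks and $k$ returns.

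The key step is the observation that for \emph{every} Dyck path the number of peaks is at least the number of returns. Indeed, the returns split the path into its primitive arches, so the number of returns equals the number of arches; since each arch is a nonempty Dyck path it contains at least one peak, and summing over the arches gives peaks $\ge$ returns. Consequently ``as many peaks as returns'' is equivalent to the equality peaks $=$ returns, and moreover equality forces each arch to contain exactly one peak, i.e.\ to have the form $\U^{j}\D^{j}$. This recovers precisely the product form $\U^{j_1}\D^{j_1}\cdots\U^{j_k}\D^{j_k}$ that is attached to the hypothesis of Proposition~\ref{prop:hook_peaks}.

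With this in hand I would finish by partitioning both sides according to the common value $k$ of the number of peaks (equivalently, the number of returns, equivalently the length of the first row of the hook) and applying the bijection of Proposition~\ref{prop:hook_peaks} within each part. Taking the disjoint union over $k=1,\dots,n$ yields the desired bijection, and hence the equality of cardinalities. The only point requiring any care is the inequality peaks $\ge$ returns together with its equality case; everything else is bookkeeping, so I do not anticipate a genuine obstacle here.
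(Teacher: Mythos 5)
Your proof is correct and matches the paper's (implicit) argument exactly: the corollary is simply Proposition~\ref{prop:hook_peaks} summed over $k=1,\dots,n$, with both sides partitioned by the common parameter $k$. Your verification that peaks $\ge$ returns always holds, with equality forcing the form $\U^{j_1}\D^{j_1}\cdots\U^{j_k}\D^{j_k}$, is a nice (if strictly unnecessary) confirmation that the hypothesis of the proposition captures every path counted by the corollary.
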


\subsection*{Flag shapes}

We next consider results related to SYT of shape $(k,k,1^{n-2k})$, which we will refer to as SYT of \emph{flag shape}.  Using the hook-length formula, one can check that the number of such tableaux is 
\begin{equation}\label{equ:hooklength}
\frac{1}{n+1}\binom{n+1}{k}\binom{n-k-1}{k-1}.
\end{equation}

For a fixed integer $k\ge 1$, Stanley~\cite{Sta96} gave a bijection from dissections of an $(n-k+2)$-gon using exactly $k-1$ diagonals to SYT of shape $(k,k,1^{n-2k})$. We will give a bijection that extends this to the Dyck path setting. Analogous to the way that Narayana numbers refine Catalan numbers by considering the number of peaks, we get the following result.

\begin{proposition}\label{prop:flags_peaks}
For $1 \leq k \leq \lfloor \frac{n}{2} \rfloor$, Dyck paths of semilength $n$ with $k$ peaks and no singletons are in bijection with SYT of shape $(k,k,1^{n-2k})$.
\end{proposition}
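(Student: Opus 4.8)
The plan is to compose the map $\varphi$ with a simple ``box-pushing'' rearrangement, very much in the spirit of the proof of Proposition~\ref{prop:hook_peaks}, the difference being that here we keep two full rows intact rather than one. First I would apply $\varphi$ to a Dyck path of semilength $n$ with $k$ peaks and no singletons. Since the number of peaks equals the number of ascents, which equals the number of blocks, the resulting modified tableau $T$ has exactly $k$ columns; and since the path has no singletons, every ascent has length at least $2$, so every column of $T$ has height at least $2$. In particular $n \ge 2k$, which is exactly the range $1 \le k \le \lfloor n/2 \rfloor$. Writing the blocks as $B_1,\dots,B_k$ ordered by their minima, I set $a_i=\min B_i$ (these occupy row $1$ of $T$) and let $c_i$ be the second-smallest element of $B_i$ (these occupy row $2$ of $T$, and are well defined since $|B_i|\ge 2$). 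I will call the entries of $T$ lying in rows $3$ and below the \emph{deep} entries; there are $n-2k$ of them.

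The forward map sends $T$ to the filling of shape $(k,k,1^{n-2k})$ obtained by keeping rows $1$ and $2$ of $T$ and then placing the $n-2k$ deep entries into the first column, below $c_1$, in increasing order. To verify that this is a standard Young tableau I would invoke the nomincreasing property of $\varphi$: reading the non-minimal entries block-by-block, and top-to-bottom within each block, produces the increasing sequence $c_1,(\text{deep entries of } B_1),c_2,(\text{deep entries of } B_2),\dots$. This single fact delivers every required inequality at once: the $c_i$ are increasing (so row $2$ is increasing), $c_1$ is the smallest non-minimal entry of all (so $c_1$ is less than every deep entry, making the first column $a_1<c_1<d_1<\cdots$ increasing), and the deep entries are already sorted. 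Row $1$ is increasing because the $a_i$ are block minima written in standard form, and the columns indexed $2,\dots,k$ are increasing because each $B_i$ is.

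For the inverse I would start from an SYT of shape $(k,k,1^{n-2k})$ with rows $a_1<\cdots<a_k$ and $c_1<\cdots<c_k$ and first-column tail $d_1<\cdots<d_{n-2k}$, and reconstruct the blocks by $B_i=\{a_i,c_i\}\cup\{d_j : c_i<d_j<c_{i+1}\}$, with the convention $c_{k+1}=\infty$. The crucial observation is that an increasing interleaving of the two fixed increasing sequences $(c_i)$ and $(d_j)$ is unique, so the values $c_i$ serve as unambiguous delimiters telling us precisely which deep entries belong to which block. This recovers the modified tableau $T$, shows the two maps are mutually inverse, and produces a nomincreasing partition all of whose blocks have size at least $2$. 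Composing with the reverse map~\eqref{eq:Dyck_from_Part} then returns the Dyck path, since the column heights of $T$ are exactly the ascent lengths and the minima $a_i$ record the down-steps between consecutive ascents.

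The step I expect to require the most care is checking that the forward map genuinely lands in standard Young tableaux, and dually that the reconstructed partition is genuinely nomincreasing with all blocks of size at least $2$; both rest entirely on the nomincreasing property, so the main conceptual point is simply to isolate the single increasing sequence displayed above and read off all the needed inequalities from it. Everything else—that $\varphi$ is a bijection onto nomincreasing partitions and that~\eqref{eq:Dyck_from_Part} inverts it—is already available from the setup at the start of this section.
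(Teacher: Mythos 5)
Your proposal is correct and follows essentially the same route as the paper: apply $\varphi$, push the boxes below the second row into the first column, and invert by redistributing each deep entry $d$ into the unique column $i$ with $b_i < d < b_{i+1}$ before applying $\varphi^{-1}$. The only difference is presentational—the paper exhibits the bijection through an example, while you spell out the general verification via the single increasing sequence of non-minimal entries, which is exactly the fact the paper's construction implicitly relies on.
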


\begin{proof}
We present the bijection using the illustrative example:

\medskip
\begin{center}
\begin{tikzpicture}[scale=0.45]
\draw[pathcolorlight] (0,0) -- (22,0);
\drawlinedots{0,1,2,3,4,5,6,7,8,9,10,11,12,13,14,15,16,17,18,19,20,21,22}%
{0,1,2,3,2,1,2,3,2,1,0,1,2,3,4,3,2,3,4,3,2,1,0}
\draw (22.5, 2) node {.};
\end{tikzpicture}
\end{center}
We apply the map $\varphi$ to a Dyck path of semilength $n$ with no singletons and represent the resulting partition of $[n]$ as a modified tableau:

\begin{center}
\begin{tikzpicture}[scale=0.45]
\small
\draw[step=1] (0,2) grid (4,4);
\draw (0,1) rectangle (1,2);
\draw[step=1] (2,0) grid (3,2);
\draw (0,3.5) node[right=1pt] {1};  
\draw (0,2.5) node[right=1pt] {2};  
\draw (0,1.5) node[right=1pt] {4};  
\draw (1,3.5) node[right=1pt] {3};  
\draw (1,2.5) node[right=1pt] {5};  
\draw (2,3.5) node[right=1pt] {6};  
\draw (2,2.5) node[right=1pt] {7};  
\draw (2,1.5) node[right=1pt] {9};  
\draw (2.5,0.5) node {10};  
\draw (3,3.5) node[right=1pt] {8};  
\draw (3.5,2.5) node {11};  
\draw (4.5, 2) node {.};
\end{tikzpicture}
\end{center}
The SYT of flag shape is then produced by pushing all the boxes below the second row into the first column:

\begin{center}
\begin{tikzpicture}[scale=0.45]
\small
\draw[step=1] (0,2) grid (4,4);
\draw[step=1] (0,-1) grid (1,2);
\draw (0,3.5) node[right=1pt] {1};  
\draw (0,2.5) node[right=1pt] {2};  
\draw (0,1.5) node[right=1pt] {4};  
\draw (0,0.5) node[right=1pt] {9};  
\draw (0.5,-0.5) node {10};  
\draw (1,3.5) node[right=1pt] {3};  
\draw (1,2.5) node[right=1pt] {5};  
\draw (2,3.5) node[right=1pt] {6};  
\draw (2,2.5) node[right=1pt] {7};  
\draw (3,3.5) node[right=1pt] {8};  
\draw (3.5,2.5) node {11};  
\draw (4.5,1.5) node {.};
\end{tikzpicture}
\end{center}

Conversely, given an SYT of shape $(k,k,1^{n-2k})$, let us call the entries of the first row $a_1, a_2, \ldots, a_k$ from left to right, and let us use $b_1, b_2, \ldots, b_k$ for the entries in the second row. We rearrange the boxes below the second row by moving the box containing the number $j$ into the unique column $i$ whereby $b_i < j < b_{i+1}$ (where we let $b_{k+1}=n+1$), thus yielding a modified tableau.  Applying $\varphi^{-1}$ completes the inverse map.
\end{proof}

Summing over $k=1,\dots,\lfloor \frac{n}{2} \rfloor$, we recover two manifestations of the sequence \oeis{A005043} of ``Riordan numbers.''

\begin{corollary}
The number of Dyck paths of semilength $n$ without singleton ascents equals the number of SYT of flag shape with $n$ boxes.
\end{corollary}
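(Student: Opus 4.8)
The plan is to deduce this directly from Proposition~\ref{prop:flags_peaks} by taking the union of the bijections over all admissible values of $k$. The key point is that both sides of the claimed equality decompose naturally according to a single statistic, and that the corresponding ranges of that statistic coincide.

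First I would partition the set of SYT of flag shape with $n$ boxes according to the value of $k$. By definition a flag shape is $(k,k,1^{n-2k})$, and for this to be a genuine partition of $n$ we need $k\ge 1$ (so that a second row exists) and $2k\le n$ (so that the number $n-2k$ of boxes below the second row is nonnegative); hence $1\le k\le\lfloor n/2\rfloor$. Since distinct values of $k$ give distinct shapes, the set of all flag-shape SYT with $n$ boxes is the disjoint union, over $1\le k\le\lfloor n/2\rfloor$, of the SYT of shape $(k,k,1^{n-2k})$.

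Next I would partition the Dyck paths of semilength $n$ with no singleton ascents according to their number of peaks $k$. The number of peaks equals the number of maximal ascents, and by the no-singleton hypothesis each such ascent has length at least $2$; since the total number of up-steps is $n$, this forces $2k\le n$, while $k\ge 1$ for any nonempty path. Thus $1\le k\le\lfloor n/2\rfloor$, matching exactly the range found above, and this set too is the disjoint union, over the same range of $k$, of the paths with $k$ peaks and no singletons.

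Finally, Proposition~\ref{prop:flags_peaks} supplies, for each fixed $k$ in this common range, a bijection between the two blocks indexed by $k$; assembling these over all $k$ gives a bijection between the full sets and hence the desired equality of cardinalities. As there is essentially no computation involved, the only point requiring care—and the closest thing to an obstacle—is checking that the two indexing ranges agree, which is exactly what the two preceding paragraphs verify. Summing the count~\eqref{equ:hooklength} over $1\le k\le\lfloor n/2\rfloor$ then yields the Riordan numbers \oeis{A005043}, and the resulting bijection exhibits the singleton-free Dyck paths as a second combinatorial family enumerated by that sequence.
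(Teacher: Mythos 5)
Your proof is correct and matches the paper's approach exactly: the paper obtains this corollary precisely by summing Proposition~\ref{prop:flags_peaks} over $k=1,\dots,\lfloor n/2\rfloor$, noting both sides give the Riordan numbers \oeis{A005043}. Your write-up simply makes explicit the routine verification that the ranges of the peak statistic and the shape parameter coincide.
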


\begin{remark}\label{rem:pechenik}
There is a less direct way to construct a bijection that proves Proposition~\ref{prop:flags_peaks} using results already in the literature.  An \emph{increasing tableau} is a semistandard Young tableau whose rows and columns are strictly increasing and the set of entries is an initial segment of the positive integers. In \cite{Pec14}, Pechenik gives a bijection from SYT of shape $(k,k,1^{n-2k})$ to increasing tableaux of shape $(n-k,n-k)$ whose maximum entry is at most $n$.  He also provides a bijection from such increasing tableaux to noncrossing partitions of $n$ into $k$ blocks each of size at least 2. There is a well-known bijection between these noncrossing partitions and Dyck paths of semilength $n$ with $k$ peaks and no singletons, as required.  
\end{remark}

Another connection between Dyck paths and SYT of flag shape begins with a result from \cite{BGMW16}.  A special case of the Dyck paths considered there is the set $\mathfrak{D}_n(1,1)$, which denotes the set of Dyck paths of semilength $2n$ created from strings of the form $\D$ and $\U^{2j}\D^j$ for $j=1,\ldots, n$.  In \cite[Theorem~3.5]{BGMW16}, the number of such Dyck paths with exactly $k$ peaks is shown to be
\begin{equation*}
\frac{1}{k}\binom{n+k}{k-1}\binom{n-1}{k-1} = \frac{1}{n+k+1}\binom{n+k+1}{k}\binom{n-1}{k-1}.
\end{equation*}
This is exactly the number of SYT of shape $(k,k,1^{n-k})$, cf.\ \eqref{equ:hooklength}. Thus we have:

\begin{proposition}\label{prop:flags_peaks2}
For $1 \leq k \leq n$, Dyck paths in $\mathfrak{D}_n(1,1)$ with $k$ peaks 
are in bijection with SYT of shape $(k,k,1^{n-k})$.
\end{proposition}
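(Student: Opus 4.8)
The plan is to deduce this from the bijection of Proposition~\ref{prop:flags_peaks} by first choosing the right semilength and then exhibiting an explicit, peak-preserving bijection between the two relevant families of Dyck paths. Note that Proposition~\ref{prop:flags_peaks} applied to Dyck paths of semilength $m$ produces SYT of shape $(k,k,1^{m-2k})$; to land on the shape $(k,k,1^{n-k})$ we want, I would set $m=n+k$, which is legitimate since $k\le n$ forces $k\le\lfloor(n+k)/2\rfloor$. Thus it suffices to construct a bijection between the paths in $\mathfrak{D}_n(1,1)$ with $k$ peaks and the Dyck paths of semilength $n+k$ with $k$ peaks and no singletons, and then compose with (the $m=n+k$ instance of) Proposition~\ref{prop:flags_peaks}.

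To set up the intermediate bijection, I would first record the maximal-ascent/maximal-descent structure of a path in $\mathfrak{D}_n(1,1)$. Since the allowed building blocks are $\D$ and $\U^{2j}\D^j$, each maximal ascent is a single $\U^{2j}$ and is immediately followed by at least $j$ down-steps, so a path with exactly $k$ peaks is uniquely of the form
\[
\U^{2j_1}\D^{d_1}\U^{2j_2}\D^{d_2}\cdots\U^{2j_k}\D^{d_k},
\]
where $j_i\ge 1$ and $d_i\ge j_i$, with $\sum_i j_i = n$ and $\sum_i d_i = 2n$. The target paths of semilength $n+k$ with $k$ peaks and no singletons are exactly those of the form $\U^{a_1}\D^{e_1}\cdots\U^{a_k}\D^{e_k}$ with $a_i\ge 2$, $e_i\ge 1$, and $\sum_i a_i=\sum_i e_i = n+k$. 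I would then define the map block by block by
\[
\U^{2j}\D^{d}\ \longmapsto\ \U^{\,j+1}\D^{\,d-j+1},
\]
that is, $a_i = j_i+1$ and $e_i = d_i-j_i+1$, with inverse $\U^{a}\D^{e}\mapsto \U^{2(a-1)}\D^{\,e+a-2}$.

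The main thing to verify is that this map is a well-defined bijection between the two families. The boundary inequalities are immediate: $j_i\ge1$ gives $a_i\ge2$ (no singletons), and $d_i\ge j_i$ gives $e_i\ge1$, while conversely $a_i\ge2$ and $e_i\ge1$ recover $j_i\ge1$ and $d_i\ge j_i$; the step counts give the correct semilength, as $\sum(j_i+1)=n+k$ and $\sum(d_i-j_i+1)=2n-n+k=n+k$. The crux is that the Dyck (nonnegativity) condition is preserved, and this reduces to the single identity
\[
a_i-e_i=(j_i+1)-(d_i-j_i+1)=2j_i-d_i.
\]
Hence the height of the image path after its $i$th descent, $\sum_{l\le i}(a_l-e_l)$, equals the height of the source path after its $i$th descent, $\sum_{l\le i}(2j_l-d_l)$. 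Since on both paths the height only rises along an ascent and falls along a descent, the valley bottoms occur at these descent endpoints, so nonnegativity of one path is equivalent to nonnegativity of the other, and the same identity run backwards handles the inverse. I expect this height-preservation identity, together with the bookkeeping that fixes the semilength at $n+k$, to be the only real content; everything else is routine, and peaks are preserved at every stage by construction since each $e_i\ge1$ keeps consecutive ascents from merging. Composing with Proposition~\ref{prop:flags_peaks} at semilength $n+k$ then yields the claimed bijection with SYT of shape $(k,k,1^{n-k})$.
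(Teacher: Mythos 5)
Your proof is correct and follows essentially the same route as the paper: the paper's bijection replaces each building block $\U^{2j}\D^j$ by $\U^{j+1}\D$ (leaving the extra $\D$ steps in place), which is exactly your block map $\U^{2j}\D^{d}\mapsto\U^{j+1}\D^{d-j+1}$, and then composes with Proposition~\ref{prop:flags_peaks} at semilength $n+k$. The only difference is that you spell out the height-preservation identity and the boundary inequalities that the paper leaves implicit, which is a welcome addition but not a different argument.
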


\begin{example}
For $n=2$, the three elements of $\mathfrak{D}_2(1,1)$ are
\medskip
\begin{center}
\begin{tikzpicture}[scale=0.35]
\begin{scope}
\draw[pathcolorlight] (0,0) -- (8,0);
\drawlinedots{0,1,2,3,4,5,6,7,8}{0,1,2,3,4,3,2,1,0}
\end{scope}
\begin{scope}[xshift=110mm]
\draw[pathcolorlight] (0,0) -- (8,0);
\drawlinedots{0,1,2,3,4,5,6,7,8}{0,1,2,1,0,1,2,1,0}
\end{scope}
\begin{scope}[xshift=220mm]
\draw[pathcolorlight] (0,0) -- (8,0);
\drawlinedots{0,1,2,3,4,5,6,7,8}{0,1,2,1,2,3,2,1,0}
\end{scope}
\end{tikzpicture}
\end{center}
\medskip
\noindent
and the three SYT are
{\small
\[
\Yvcentermath1
\young(1,2,3)\qquad\quad \young(13,24)\qquad\quad \young(12,34)\ .
\]
}
\end{example}

\medskip
Here is a bijective proof of Proposition~\ref{prop:flags_peaks2}: starting with an element of $\mathfrak{D}_n(1,1)$ with $k$ peaks, replace each building block $\U^{2j}\D^j$ by $\U^{j+1}\D$ to obtain a Dyck path of semilength $n+k$ with $k$ peaks and no singleton ascents. Then apply the bijection from Proposition~\ref{prop:flags_peaks}. 

\section{Dyck paths to SYT of height at most 3}
\label{sec:height3}

It is known that SYT with $n$ boxes and at most $3$ rows are in one-to-one correspondence with the set of Motzkin paths of length $n$ (see \cite{Reg81} and \cite{Eu10}), enumerated by the sequence \oeis{A001006}. On the other hand, Motzkin paths of length $n$ are in bijection with Dyck paths of semilength $n$ that avoid three consecutive up-steps. In other words, we have the following correspondence that we will prove here bijectively using the map $\varphi$.

\begin{proposition}\label{prop:height3}
The number of Dyck paths of semilength $n$ that avoid three consecutive up-steps equals the number of SYT with $n$ boxes and at most $3$ rows. 
\end{proposition}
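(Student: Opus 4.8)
The plan is to factor the desired bijection through the map $\varphi$ and the Robinson--Schensted (RS) correspondence for involutions, the point being that avoiding $\U^3$ forces $\varphi$ to produce a partial matching. A Dyck path avoids three consecutive up-steps exactly when every ascent has length at most $2$, and under $\varphi$ the size of each block equals the length of the corresponding ascent; hence for such a path $\varphi(P)$ is a partition of $[n]$ all of whose blocks have size $1$ or $2$, that is, a partial matching (equivalently an involution $\sigma$ of $[n]$), in which the singletons of $P$ are precisely the fixed points. My first observation would be that, for matchings, the defining property of $\Part_\nmi(n)$ is exactly \emph{nonnesting}: writing the size-$2$ blocks as arcs $(b,c)$ with $b<c$ and ordering them by their openers $b$, the requirement that the non-minimal elements increase says the closers $c$ increase, which forbids two arcs from nesting while still permitting them to cross. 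Since $\varphi\colon\Dyck(n)\to\Part_\nmi(n)$ is a bijection, it restricts to a bijection from $\U^3$-avoiding Dyck paths of semilength $n$ onto nonnesting partial matchings of $[n]$, carrying singletons to fixed points.

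Next I would apply the RS correspondence to the involution $\sigma$ of the matching, obtaining a single standard Young tableau $P(\sigma)=Q(\sigma)$ with $n$ boxes; this is a bijection from involutions of $[n]$ to SYT with $n$ boxes. Two classical facts do most of the remaining work: the number of rows of $P(\sigma)$ equals the length of the longest strictly decreasing subsequence of $\sigma$ (Schensted/Greene), and the number of columns of odd length equals the number of fixed points of $\sigma$ (Sch\"utzenberger). The latter immediately supplies the promised statistic (singletons $\leftrightarrow$ odd columns), so the whole proposition reduces to a single claim: $\sigma$ has longest decreasing subsequence of length at most $3$ if and only if its matching is nonnesting.

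I expect this equivalence to be the crux. For the easy direction, two nesting arcs $a<c<d<b$ give, at the increasing positions $a<c<d<b$, the values $\sigma(a)=b>\sigma(c)=d>\sigma(d)=c>\sigma(b)=a$, a decreasing subsequence of length $4$; so height at most $3$ forces nonnesting. For the converse, the key observation is that in a nonnesting matching each of the three \emph{types} of position is increasing: the openers listed in increasing order have increasing partners (this is precisely the nonnesting condition), the closers likewise have increasing partners, and every fixed point has $\sigma(i)=i$. Consequently a strictly decreasing subsequence of $\sigma$ can use at most one opener, at most one closer, and at most one fixed point, so it has length at most $3$. Combining both directions shows that the image of the nonnesting matchings under RS is exactly the set of SYT of height at most $3$.

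Putting the three steps together, $\varphi$ followed by RS maps $\U^3$-avoiding Dyck paths of semilength $n$ bijectively onto SYT with $n$ boxes and at most $3$ rows, with singleton ascents corresponding to columns of odd length. The only delicate part is the row-count equivalence; once the ``each type is increasing'' observation is isolated it is routine, and the odd-column statistic comes for free from the classical theory of RS on involutions. The main thing to double-check carefully is that the nonnesting condition I extract from $\Part_\nmi(n)$ is genuinely the one needed for the ``each type increasing'' argument (rather than the dual noncrossing condition), since it is nonnesting, not noncrossing, that bounds decreasing subsequences here.
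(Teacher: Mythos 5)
Your proof is correct, but it takes a genuinely different route from the paper's. Both arguments begin with $\varphi$, but the paper then finishes with a self-contained rearrangement algorithm: it repeatedly empties a length-$2$ column of the modified tableau, sending one entry into a third row and sliding columns, and it spells out an explicit inverse; no RSK machinery is invoked. You instead observe that avoiding $\U^3$ forces $\varphi(P)$ to be a partial matching, that for matchings the nomincreasing condition is exactly nonnesting (this identification is right: ordering arcs by openers, ``closers increase'' excludes precisely the pattern $a<c<d<b$ while allowing crossings), and you then finish with Robinson--Schensted for involutions, Schensted's theorem, and Sch\"utzenberger's fixed-point theorem. Your crux --- nonnesting $\Leftrightarrow$ no decreasing subsequence of length $4$ --- is proved correctly: a nesting pair yields the decreasing values $b>d>c>a$, and conversely a nonnesting matching splits $[n]$ into three sets (openers, closers, fixed points) on each of which the involution is increasing, so a decreasing subsequence has length at most $3$. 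What the paper's approach buys is an explicit, elementary, fully local bijection. What yours buys is brevity modulo classical results, the singleton/odd-column refinement for free (the paper only records it afterwards, in Remark~\ref{rem:nnn}), and generalizability: your argument is in effect the $k=2$ case of the proposition of Burrill et al.\ \cite{BCFMM16} that the paper quotes and uses as a black box in Section~\ref{sec:nxpt}, and sharpening your ``three increasing subsequences'' step (in any decreasing subsequence of an involution, the openers used form a nesting, as do the closers, and at most one fixed point appears) reproves that statement for all $k$: $k$-nonnesting partial matchings are exactly the involutions whose decreasing subsequences have length at most $2k-1$. One caveat: the bijection you construct need not coincide as a map with the paper's algorithmic one; both establish the proposition, but they are different bijections.
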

\begin{proof}
\newcommand{\column}{\genfrac{[}{]}{0pt}{}}
Let $D$ be a Dyck path of semilength $n$ having $m$ peaks and avoiding three consecutive up-steps. We apply $\varphi$ from \eqref{map_phi} to $D$ and call the columns of the corresponding modified tableau $v_1,\dots,v_m$, where each $v_\ell$ is of the form $[x_\ell]$ or $\column{x_\ell}{y_\ell}$. If the modified tableau is an SYT, we are done. If not, we repeatedly apply the following algorithm until an SYT is obtained:
\begin{itemize}
\item Let $j$ be the index of the leftmost column of length 1 and let $v_k$ be the first column of length 2 to the right of $v_j$. If $j=1$, let $y_0=0$. Empty column $k$ according to the following rules.
\item If $x_k>y_{j-1}$, place $y_k$ in the third row and move $x_k$ to the second row of column $j$ so that $y_j=x_k$.
 \item If $x_k<y_{j-1}$, let $i$ be the largest index such that $y_i<x_k$, or set $i=0$ if no such $y_i$ exists. We then place $y_{i+1}$ in the third row, move $x_k$ to $y_{i+1}$'s previous position, and move $y_k$ to the second row of column $j$ so that $y_j=y_k$.
\item Slide the new element in the third row to the left as much as possible, and fill column $k$ by shifting to the left all columns $v_i$ with $i>k$.
\end{itemize}
Since the elements in the third row all come from the second row, and we are placing them in increasing order, the algorithm is guaranteed to create an SYT of height at most 3.

For example,
\begin{center}
\begin{tikzpicture}[scale=0.4]
\begin{scope}
\scriptsize
	\draw (0.7,0.8) node[left=0.1pt] {1};
	\draw (3.7,1.8) node[left=0.1pt] {2};
	\draw (2.7,0.8) node[left=0.1pt] {3};
	\draw (7.7,1.8) node[left=0.1pt] {4};
	\draw (6.7,0.8) node[left=0.1pt] {8};
	\draw (9.7,1.8) node[left=0.1pt] {5};
	\draw (12.7,2.8) node[left=0.1pt] {6};
	\draw (11.7,1.8) node[left=0.1pt] {9};
	\draw (14.7,2.8) node[left=0.1pt] {7};
\draw[pathcolorlight] (0,0) -- (18,0);
\drawlinedots{0,1,2,3,4,5,6}{0,1,0,1,2,1,0}
\drawlinedots{6,7,8,9,10,11,12,13,14,15,16,17,18}{0,1,2,1,2,1,2,3,2,3,2,1,0}
\draw[->] (18.5,1.2) -- (20,1.2);
\draw (19.3,1.2) node[above=1pt] {\small $\varphi$};
\end{scope}
\begin{scope}[xshift=210mm]
\small
\draw[step=1] (0,1) grid (6,2);
\draw (1,0) rectangle (2,1);
\draw (2,0) rectangle (3,1);
\draw (4,0) rectangle (5,1);
\draw (0,1.5) node[right=0.5pt] {1};  
\draw (1,0.5) node[right=0.5pt] {3};  
\draw (1,1.5) node[right=0.5pt] {2};  
\draw (2,1.5) node[right=0.5pt] {4};  
\draw (2,0.5) node[right=0.5pt] {8};  
\draw (3,1.5) node[right=0.5pt] {5};  
\draw (4,1.5) node[right=0.5pt] {6};  
\draw (4,0.5) node[right=0.5pt] {9};  
\draw (5,1.5) node[right=0.5pt] {7};  
\end{scope}
\end{tikzpicture}

\vspace{2em}
\begin{tabular}{ll}
$x_2>y_0:$ & \qquad
\begin{tikzpicture}[scale=0.45,baseline=20pt]
\begin{scope}
\small
\draw[step=1] (0,1) grid (6,2);
\draw (1,0) rectangle (2,1);
\draw (2,0) rectangle (3,1);
\draw (4,0) rectangle (5,1);
\draw (0,1.5) node[right=1pt] {1};  
\draw (1,0.5) node[color1,right=1pt] {\cbf{3}};  
\draw (1,1.5) node[color1,right=1pt] {\cbf{2}};  
\draw (2,1.5) node[right=1pt] {4};  
\draw (2,0.5) node[right=1pt] {8};  
\draw (3,1.5) node[right=1pt] {5};  
\draw (4,1.5) node[right=1pt] {6};  
\draw (4,0.5) node[right=1pt] {9};  
\draw (5,1.5) node[right=1pt] {7};  
\draw[->] (7,1) -- (8.5,1);
\end{scope}
\begin{scope}[xshift=95mm,yshift=-5mm]
\small
\draw[step=1] (0,2) grid (5,3);
\draw[step=1] (0,1) grid (2,2);
\draw (0,0) rectangle (1,1);
\draw (3,1) rectangle (4,2);
\draw (0,2.5) node[right=1pt] {1};  
\draw (0,1.5) node[color1,right=1pt] {\cbf{2}};  
\draw (0,0.5) node[color1,right=1pt] {\cbf{3}};  
\draw (1,2.5) node[right=1pt] {4};  
\draw (1,1.5) node[right=1pt] {8};  
\draw (2,2.5) node[right=1pt] {5};  
\draw (3,2.5) node[right=1pt] {6};  
\draw (3,1.5) node[right=1pt] {9};  
\draw (4,2.5) node[right=1pt] {7};  
\end{scope}
\draw[->,dashed] (8.5,-0.5) -- (6,-1.5);
\end{tikzpicture}
\\[4em]
$x_4<y_2:$ & \qquad
\begin{tikzpicture}[scale=0.45,baseline=30pt]
\begin{scope}
\small
\draw[step=1] (0,2) grid (5,3);
\draw[step=1] (0,1) grid (2,2);
\draw (0,0) rectangle (1,1);
\draw (3,1) rectangle (4,2);
\draw (0,2.5) node[right=1pt] {1};  
\draw (0,1.5) node[right=1pt] {2};  
\draw (0,0.5) node[right=1pt] {3};  
\draw (1,2.5) node[right=1pt] {4};  
\draw (1,1.5) node[color2,right=1pt] {\cbf{8}};  
\draw (2,2.5) node[right=1pt] {5}; 
\draw (3,2.5) node[color2,right=1pt] {\cbf{6}};  
\draw (3,1.5) node[color2,right=1pt] {\cbf{9}};  
\draw (4,2.5) node[right=1pt] {7};  
\draw[->] (6.9,1.5) -- (8.4,1.5);
\end{scope}
\begin{scope}[xshift=95mm]
\small
\draw[step=1] (0,1) grid (3,3);
\draw[step=1] (0,0) grid (2,1);
\draw (3,2) rectangle (4,3);
\draw (0,2.5) node[right=1pt] {1};  
\draw (0,1.5) node[right=1pt] {2};  
\draw (0,0.5) node[right=1pt] {3};  
\draw (1,2.5) node[right=1pt] {4};  
\draw (1,1.5) node[color2,right=1pt] {\cbf{6}};  
\draw (1,0.5) node[color2,right=1pt] {\cbf{8}};  
\draw (2,2.5) node[right=1pt] {5};  
\draw (2,1.5) node[color2,right=1pt] {\cbf{9}};  
\draw (3,2.5) node[right=1pt] {7};  
\end{scope}
\end{tikzpicture}
\end{tabular}
\end{center}

\medskip
The above algorithm can be reversed. Let $T$ be an SYT of height at most 3 with $n$ boxes and columns $v_1,v_2,\dots,v_m$. Thus each $v_\ell$ is of the form $[x_\ell]$, $\column{x_\ell}{y_\ell}$, or $\begin{bmatrix}x_\ell\\y_\ell\\z_\ell \end{bmatrix}$. 

If $T$ has height 3, slide the elements of the third row to the right as much as possible subject to the restriction that the columns must have increasing entries.  Then repeatedly apply the algorithm below until there are no more columns of length 3.

\begin{itemize}
\item Let $j$ be the index of the rightmost column of length 3, and let $k$ be the largest index such that $x_{k}<y_j$. Note that, by definition, the entry $z_j$ must be smaller than any existing $y_i$ with $i>j$.
\item Shift the columns $v_{k+1},v_{k+2},\dots$ to the right ($v_i\to v_{i+1}$ for all $i\ge k+1$).
\item If $k=j$ or if $v_{j+1},\dots,v_k$ are all columns of length 1, then insert $\column{y_j}{z_j}$ as the new column $v_{k+1}$, removing $y_j$ and $z_j$ from their previous positions.
\item If $k>j$ and if $v_\ell$ is the rightmost column of length 2 with $j+1\le\ell\le k$, then let $v_{k+1} = \column{y_j}{y_\ell}$, removing $y_j$ and $y_\ell$ from their previous positions, and move $z_j$ to $y_j$'s previous position. Thus the modified columns $v_j$ and $v_\ell$ have lengths 2 and 1, respectively. 
\end{itemize}

Finally, given a tableau with two rows and $m$ columns (standard or modified), we apply $\varphi^{-1}$ to yield a Dyck path of the appropriate type.
\end{proof}

\def\DD{\mathcal{D}_{3\textsc{cat}}}

\begin{remark}\label{rem:nnn}
The above bijection maps Dyck paths with $s$ singletons to SYT with $s$ columns of odd length. Also, it is not hard to see that SYT of shape $(n,n,n)$ correspond to Dyck paths of semilength $3n$ that avoid three consecutive up-steps, have exactly $n$ singletons, end with $\U^2\D^\ell$ for some $\ell\ge 2$, and such that every subpath starting at the origin has at least as many 1-ascents as 2-ascents. We denote this class of special Dyck paths by $\DD(n)$. 

For example, if $n=2$, there are five such SYT:

{\small
\begin{equation*}
\young(12,34,56)\qquad \young(12,35,46)\qquad \young(13,24,56)\qquad \young(13,25,46)\qquad \young(14,25,36)
\end{equation*}}

\medskip\noindent
corresponding to the five paths in $\DD(2)$:

\medskip
\begin{center}
\begin{tikzpicture}[scale=0.28]
\begin{scope}
\draw[pathcolorlight] (0,0) -- (12,0);
\drawlinedots{0,1,2,3,4,5,6,7,8,9,10,11,12}{0,1,0,1,0,1,2,1,2,3,2,1,0}
\end{scope}
\begin{scope}[xshift=140mm]
\draw[pathcolorlight] (0,0) -- (12,0);
\drawlinedots{0,1,2,3,4,5,6,7,8,9,10,11,12}{0,1,0,1,0,1,2,1,0,1,2,1,0}
\end{scope}
\begin{scope}[xshift=280mm]
\draw[pathcolorlight] (0,0) -- (12,0);
\drawlinedots{0,1,2,3,4,5,6,7,8,9,10,11,12}{0,1,0,1,2,1,2,1,2,3,2,1,0}
\end{scope}
\end{tikzpicture}
\end{center}

\medskip
\begin{center}
\begin{tikzpicture}[scale=0.28]
\begin{scope}
\draw[pathcolorlight] (0,0) -- (12,0);
\drawlinedots{0,1,2,3,4,5,6,7,8,9,10,11,12}{0,1,0,1,2,1,2,1,0,1,2,1,0}
\end{scope}
\begin{scope}[xshift=140mm]
\draw[pathcolorlight] (0,0) -- (12,0);
\drawlinedots{0,1,2,3,4,5,6,7,8,9,10,11,12}{0,1,0,1,2,1,0,1,0,1,2,1,0}
\draw (13,1) node {.};
\end{scope}
\end{tikzpicture}
\end{center}
\end{remark}

\begin{proposition} \label{prop:nnn_syt}
The set of SYT of shape $(n,n,n)$ is in bijection with the set of Dyck paths in $\DD(n)$. By the hook-length formula, these sets are enumerated by
\[ \frac{2(3n)!}{n!(n+1)!(n+2)!}, \]
which is the sequence \oeis{A005789} of 3-dimensional Catalan numbers.
\end{proposition}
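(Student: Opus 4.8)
The plan is to prove that the bijection $\Phi$ from Proposition~\ref{prop:height3} restricts to a bijection from $\DD(n)$ onto $\syt{(n,n,n)}$; the stated enumeration is then a routine application of the hook-length formula to the rectangle $(n,n,n)$. Since $\Phi$ is already a bijection between the Dyck paths of semilength $3n$ avoiding $\U^3$ and the SYT with $3n$ boxes and at most three rows, and since (by Remark~\ref{rem:nnn}) it sends singletons to columns of odd length, the whole task reduces to one question: for which such paths $D$ does $\Phi(D)$ have the rectangular shape $(n,n,n)$?

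I would begin with the bookkeeping forced by the counts. A height-$\le 3$ SYT with $3n$ boxes has $a$ columns of length $1$, $b$ of length $2$, and $c$ of length $3$, with $a+2b+3c=3n$. If $D$ has exactly $n$ singletons, then its image has exactly $n$ odd-length columns, i.e.\ $a+c=n$, and these two equations force $a=b=n-c$. Hence the image is the full rectangle $(n,n,n)$ if and only if $c=n$, equivalently $a=0$: no columns of length $1$ survive the straightening algorithm. I would also observe here that the third defining condition of $\DD(n)$ is redundant: by the count condition there are $n$ singletons and $n$ $2$-ascents, and the ballot condition forces the last ascent to be a $2$-ascent (a balanced ballot word in the two ascent-lengths must end in a $2$-ascent), so the path ends in $\U^2$; since the vertex just before the final descent then sits at height at least $2$, the final descent has length $\ge 2$ automatically. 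Thus $\DD(n)$ is exactly the set of Dyck paths of semilength $3n$ avoiding $\U^3$ with $n$ singletons whose sequence of ascent lengths satisfies the ballot condition.

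The core of the proof is a clean analysis of the straightening algorithm of Proposition~\ref{prop:height3}. Encode the modified two-row tableau $\varphi(D)$ by its word of column lengths $W\in\{\mathsf{s},\mathsf{p}\}^{2n}$, where $\mathsf{s}$ marks a length-$1$ column (a singleton) and $\mathsf{p}$ a length-$2$ column (a $2$-ascent); this word is precisely the sequence of ascent lengths of $D$. I would first establish the structural invariant that, after each pass, the length-$3$ columns occupy a prefix and the third row is filled from left to right in increasing order; granting this, the newly created third-row entry always lands in the leftmost non-length-$3$ column. Tracing the column lengths through both sub-cases of the algorithm ($x_k>y_{j-1}$ and $x_k<y_{j-1}$), the net effect of one pass on $W$ is always the same: delete one $\mathsf{s}$ from the first maximal $\mathsf{s}$-run and delete the $\mathsf{p}$ immediately following that run. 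The algorithm halts when the first $\mathsf{s}$-run is not followed by a $\mathsf{p}$; since the word stays balanced, this happens exactly when $W=\mathsf{p}^m\mathsf{s}^m$ for some $m\ge 0$ ($m=0$ meaning $W$ is empty). The key lemma is then purely combinatorial: reading $\mathsf{s}$ as an up-step and $\mathsf{p}$ as a down-step, the deletion above leaves the minimum height of the associated lattice path unchanged, so it preserves the \emph{defect} $d(W):=\max\bigl(0,\ \max_{\text{prefixes } u}(\#\mathsf{p}(u)-\#\mathsf{s}(u))\bigr)$. Since the algorithm terminates with $W$ empty (defect $0$) or stuck at $\mathsf{p}^m\mathsf{s}^m$ (defect $m$), and the number $a$ of surviving length-$1$ columns equals this terminal $m$, I obtain the exact formula $a=d(W)=d(w)$ for the original word $w$. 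Therefore $\Phi(D)$ is rectangular if and only if $d(w)=0$, i.e.\ if and only if the ascent-length sequence of $D$ is a ballot sequence, which is exactly the condition defining $\DD(n)$. Running this equivalence in both directions (using that $\Phi$ is a bijection) identifies $\Phi(\DD(n))$ with $\syt{(n,n,n)}$.

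I expect the main obstacle to be the first half of the previous paragraph: verifying the structural invariant and, in particular, checking that across \emph{both} sub-cases of the algorithm—especially the case $x_k<y_{j-1}$, where a second-row entry is displaced before being pushed to the third row—the newly created third-row entry really does slide into the leftmost non-length-$3$ column, so that the length-$3$ columns remain a prefix and the net operation on $W$ is exactly the $\mathsf{s}$-and-following-$\mathsf{p}$ deletion claimed. Once this label-level bookkeeping is pinned down, the defect-preservation lemma and the resulting formula $a=d(w)$ are short, and the hook-length computation $\frac{2(3n)!}{n!(n+1)!(n+2)!}$ completes the proof.
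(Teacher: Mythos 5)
Your proposal is correct and takes the same route as the paper, which in fact offers no detailed argument: Proposition~\ref{prop:nnn_syt} is presented there as an immediate consequence of the bijection of Proposition~\ref{prop:height3} together with Remark~\ref{rem:nnn}'s assertion that ``it is not hard to see'' which paths correspond to SYT of shape $(n,n,n)$, plus the hook-length formula. Your word-level analysis of the straightening algorithm (each pass deletes one $\mathsf{s}$ from the first $\mathsf{s}$-run together with the $\mathsf{p}$ immediately following it, and this preserves the defect), the resulting criterion that the output is rectangular exactly when the ascent-length word is a ballot sequence, and your observation that the condition of ending in $\U^2\D^\ell$ with $\ell\ge 2$ is redundant, together constitute precisely the verification that the paper leaves to the reader, and they check out.
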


\subsection*{Tableaux with two rows}

In the remaining part of this section, we modify the classic bijection between Dyck paths and standard Young tableaux of shape $(n, n)$ to describe SYT of shape $(n, n-d)$ and some nonstandard tableaux of shape $(n,n)$.

In \cite{Gud10}, Gudmundsson studies certain families of Dyck paths, SYT, and pattern avoiding permutations. The main result in \cite{Gud10} related to our work is the following theorem for which the author provides a bijective proof.

\begin{theorem*}[\cite{Gud10}]
Let $d = k + p$. The class of Dyck paths of semilength $n$ that begin with at least $k$ successive up-steps, end with at least $p$ successive down-steps, and touch the $x$-axis at least once somewhere between the endpoints is equinumerous with the class of SYT of shape $(n, n-d)$.
\end{theorem*}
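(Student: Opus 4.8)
The plan is to pass through lattice paths, generalizing the classical bijection between Dyck paths of semilength $n$ and SYT of shape $(n,n)$ recalled in the introduction. First I would encode an SYT $T$ of shape $(n,n-d)$ by its \emph{reading word}: scanning the entries $1,2,\dots,2n-d$, record an up-step $\U$ when the current entry lies in the first row and a down-step $\D$ when it lies in the second row. The column-strictness of $T$ guarantees that every prefix has at least as many $\U$'s as $\D$'s, so this word is a lattice path staying weakly above the $x$-axis; it has exactly $n$ up-steps and $n-d$ down-steps, and hence terminates at height $d=k+p$. Conversely, every such \emph{ballot path} is the reading word of a unique SYT of shape $(n,n-d)$. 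Thus it suffices to biject these ballot paths with the Dyck paths described in the statement.

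A Dyck path of semilength $n$ has $n$ up-steps and $n$ down-steps, that is, exactly $d$ more down-steps than a ballot path, and the strategy is to account for this surplus using the three structural conditions. Given a path $P$ in the stated family, write $P=\U^k\,M\,\D^p$, where the leading $\U^k$ is (part of) its forced initial ascent and $\D^p$ is (part of) its forced final descent. The middle portion $M$ runs from height $k$ to height $p$ and stays weakly above the $x$-axis; moreover, since the heights encountered along $\U^k$ and along $\D^p$ are all positive except at the global endpoints, the required interior return to the $x$-axis must occur strictly inside $M$. I would then convert $M$ into a ballot path by ``unfolding'' it: reflecting the relevant portion of $M$ about its minimum level so as to flip exactly $k$ down-steps into up-steps. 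This simultaneously corrects the step counts from $(n-k,\,n-p)$ to $(n,\,n-d)$ and raises the terminal height from $p-k$ to $p+k=d$, while the interior return of $M$ becomes the contact of the ballot path with the $x$-axis. The inverse reads $k$ and $p$ off from the terminal height together with the final contact with the axis, reflects back to recover $M$, and reattaches $\U^k$ and $\D^p$.

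The step I expect to be the main obstacle is proving that this fold is a genuine bijection onto \emph{exactly} the three-condition family. Concretely, one must show that the reflection choice is canonical and reversible, that the interior-return condition corresponds precisely to nonnegativity of the ballot path, and that the boundary ascent $\U^k$ and descent $\D^p$ are recovered unambiguously; this is in essence a reflection-principle (or cycle-lemma) verification. A useful consistency check along the way is that the resulting count depends only on $d$ and not on the individual split $d=k+p$, matching the fact that SYT of shape $(n,n-d)$ are counted by the ballot number $\binom{2n-d}{n}-\binom{2n-d}{n+1}$; the degenerate cases $k=0$ or $p=0$, where a forced boundary block is empty, should be treated separately.

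Finally, as an alternative that sidesteps the fold entirely, I note that simply appending $\D^{d}$ to a ballot path gives a clean bijection between SYT of shape $(n,n-d)$ and Dyck paths of semilength $n$ whose final descent has length at least $d$. Gudmundsson's three-condition family is then an equinumerous repackaging of this simpler set, and one route to the theorem is to exhibit a direct bijection between these two families of Dyck paths by redistributing the terminal descent of length $\geq d$ into a trailing block $\D^p$ and a rotated leading block $\U^k$, again with the meeting point recording the interior return.
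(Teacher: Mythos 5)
A preliminary point: the paper does not prove this statement. It is quoted from Gudmundsson \cite{Gud10}, who is credited with the bijective proof, and the paper's own contribution in that section is the \emph{different} result of Proposition~\ref{prop:n_n-d} (SYT of shape $(n,n-d)$ versus Dyck paths of semilength $n+1$ with exactly $d+1$ returns). So your proposal must be judged on its own terms. Its skeleton is sound: encoding an SYT of shape $(n,n-d)$ by a ballot path with $n$ up-steps and $n-d$ down-steps is standard; writing $P=\U^k M\D^p$ and observing that $M$ has $n-k$ up-steps and $n-p$ down-steps, stays weakly above level $-k$ when shifted to start at height $0$, and attains level $-k$ exactly when $P$ has the required touch, is correct; and a counting check ($\binom{2n-d}{n}-\binom{2n-d}{n+1}$ on both sides) confirms that such middle factors $M$ are equinumerous with the ballot paths, so a bijection of the kind you want does exist.

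However, the step you yourself flag as the main obstacle is genuinely broken as you describe it. Reflecting a \emph{contiguous} portion of $M$ about its minimum level flips every step in that window, not ``exactly $k$ down-steps,'' and the output generally violates nonnegativity. Concretely, take $n=4$, $(k,p)=(1,1)$, $P=\U\U\D\D\U\U\D\D$, so $M=\U\D\D\U\U\D$ with heights $1,0,-1,0,1,0$ when started at $0$: reflecting the prefix ending at the first minimum about level $-1$ and translating up by $2$ produces the path $\D\U\U\U\U\D$, whose first step dips below the axis, so it is not a ballot path. The correct repair is classical but different: flip the $k$ \emph{first-passage} down-steps of $M$ (the steps by which $M$ first reaches levels $-1,-2,\dots,-k$ --- a non-contiguous set). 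Writing $M=A_0\D A_1\D\cdots\D A_k$ with these steps displayed and replacing each displayed $\D$ by $\U$ yields a nonnegative path with $n$ up-steps and $n-d$ down-steps ending at height $d$; the inverse flips back the up-steps immediately following the \emph{last} visits to levels $0,1,\dots,k-1$. (In the example this gives $\U\D\U\U\U\D$, as it should.) A second, unflagged problem: the ``degenerate cases'' $k=0$ or $p=0$ that you set aside are not mere bookkeeping, because under the literal reading of ``touch the $x$-axis between the endpoints'' the statement is \emph{false} there --- for $n=3$, $(k,p)=(2,0)$, only $\U\U\D\D\U\D$ qualifies, while there are $3$ SYT of shape $(3,1)$. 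The condition must be read as ``the middle factor $M$ reaches the axis,'' with a touch at a junction allowed when $k=0$ or $p=0$; that is exactly what the corrected flip bijection characterizes, so once fixed your argument proves the right theorem uniformly, but you must amend the statement in those cases rather than verify it. Your closing alternative (appending $\D^d$ to a ballot path) is indeed a correct bijection, but it is precisely the $(k,p)=(0,d)$ instance under this corrected reading, and passing from that family to the general $(k,p)$ family requires the same flip construction, so it does not sidestep the work.
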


Here is a different connection with the same class of SYT.

\begin{proposition}\label{prop:n_n-d}
For $0\le d \le n$, Dyck paths of semilength $n+1$ having exactly $d+1$ returns are in bijection with SYT of shape $(n, n-d)$.
\end{proposition}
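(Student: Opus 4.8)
The plan is to use the classical encoding of a two-row SYT as a lattice path, and then to apply a last-passage decomposition that converts a path ending at positive height into a genuine Dyck path whose returns are controlled.

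First I would record the standard fact that an SYT $T$ of shape $(n,n-d)$ is equivalent to the word $w=w_1\cdots w_{2n-d}$ in $\{\U,\D\}$ obtained by setting $w_j=\U$ if $j$ lies in the first row of $T$ and $w_j=\D$ if $j$ lies in the second row. Because $T$ is standard, every prefix of $w$ has at least as many $\U$'s as $\D$'s, so $w$ traces a lattice path $P$ that starts at the origin, stays weakly above the $x$-axis, uses $n$ up-steps and $n-d$ down-steps, and therefore ends at height $d$. This gives a bijection between SYT of shape $(n,n-d)$ and such ``ballot paths'' to height $d$.

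Next comes the key step: decompose $P$ by its final ascents. For $i=1,\dots,d$, let $u_i$ be the up-step at which $P$ leaves height $i-1$ for the last time; after $u_i$ the path never again drops below height $i$, and one checks that $u_1,\dots,u_d$ occur in left-to-right order. These $d$ steps cut $P$ into $d+1$ excursions
\[
 P = Q_0\, u_1\, Q_1\, u_2 \cdots u_d\, Q_d,
\]
where each $Q_i$, after being translated down by $i$, is an ordinary Dyck path; writing $q_i$ for its semilength, we have $q_0+\cdots+q_d=n-d$. This last-passage decomposition is itself a bijection between ballot paths to height $d$ and $(d+1)$-tuples $(Q_0,\dots,Q_d)$ of Dyck paths of total semilength $n-d$. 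I would then assemble the output Dyck path
\[
 \U Q_0 \D\;\U Q_1 \D \cdots \U Q_d \D .
\]
Each factor $\U Q_i \D$ is primitive (it touches the $x$-axis only at its endpoints), so the result is a Dyck path of semilength $(n-d)+(d+1)=n+1$ with exactly $d+1$ returns. The inverse simply factors a Dyck path with $d+1$ returns into its $d+1$ primitive arches $\U Q_i \D$, strips the outer $\U,\D$ from each to recover the $Q_i$, reassembles $P=Q_0 u_1 Q_1\cdots u_d Q_d$, and reads off the two-row SYT.

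I expect the main obstacle to be the careful justification of the last-passage decomposition: verifying that the final-ascent steps $u_i$ are well defined, appear in increasing order of position, split $P$ into genuine excursions, and that this matches the arch factorization on the Dyck-path side. Once that structural observation is in place, the two maps are manifestly inverse. As a sanity check one can compare cardinalities: the ballot number gives $\frac{d+1}{n+1}\binom{2n-d}{n-d}$ SYT of shape $(n,n-d)$, while the number of Dyck paths of semilength $n+1$ with exactly $d+1$ returns is $\frac{d+1}{n+1}\binom{2n-d}{n}$, and these agree since $\binom{2n-d}{n}=\binom{2n-d}{n-d}$.
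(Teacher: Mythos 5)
Your proof is correct, and in fact the bijection you construct coincides exactly with the paper's map, even though the two are described quite differently. The paper's rule is: given the Dyck path of semilength $n+1$ with $d+1$ returns, number the steps from left to right, ignoring the first up-step and skipping every down-step that touches the $x$-axis; the up-step labels then form the first row and the down-step labels the second row of the SYT. If you write the Dyck path in your arch form $\U Q_0 \D\, \U Q_1 \D \cdots \U Q_d \D$, the steps that survive the paper's labeling rule, read in order, are precisely $Q_0\, u_1\, Q_1\, u_2 \cdots u_d\, Q_d$ --- your reassembled ballot path $P$ --- and recording the positions of up-steps in row 1 and of down-steps in row 2 is exactly the classical ballot-word encoding with which you begin. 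So the two maps agree step for step. What your route buys is the justification the paper omits: it states the labeling rule and leaves bijectivity ``as a nice exercise,'' whereas your last-passage decomposition supplies precisely that exercise --- the $u_i$ are well defined and occur in left-to-right order, the pieces $Q_i$ are genuine excursions of total semilength $n-d$, and the inverse is manifest from the unique factorization of a Dyck path with $d+1$ returns into primitive arches. Your closing cardinality check, $\frac{d+1}{n+1}\binom{2n-d}{n-d}$ on both sides, is also correct and is a worthwhile independent confirmation.
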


The bijection is defined as follows. Given a Dyck path of semilength $n+1$ with exactly $d+1$ returns, number each step from left to right ignoring the first up-step and skipping every down-step that touches the $x$-axis. Then create the SYT of shape $(n,n-d)$ by placing the labels of the $n$ up-steps in the first row and the labels of the $n-d$ labeled down-steps in the second row. For example:

\medskip
\begin{center}
\begin{tikzpicture}[scale=0.48]
\begin{scope}
\scriptsize
	\draw (1.4,1.8) node {1};
	\draw (2.4,2.8) node {2};
	\draw (3.6,2.8) node {3};
	\draw (4.6,1.8) node {4};
	\draw (5.4,1.8) node {5};
	\draw (6.6,1.8) node {6};
	\draw (8.4,0.8) node {7};
	\draw (9.4,1.8) node {8};
	\draw (10.6,1.8) node{9};
	\draw (12.3,0.8) node {10};
\draw[pathcolorlight] (0,0) -- (14,0);
\drawlinedots{0,1,2,3,4,5,6,7,8,9,10,11,12,13,14}%
{0,1,2,3,2,1,2,1,0,1,2,1,0,1,0}
\end{scope}

\draw[->] (14.5,1) -- (15.5,1);

\begin{scope}[xshift=162mm]
\small
\draw[step=1] (0,1) grid (6,2);
\draw[step=1] (0,0) grid (4,1);
\draw (0,1.5) node[right=1.3pt] {1};  
\draw (1,1.5) node[right=1.3pt] {2};  
\draw (2,1.5) node[right=1.3pt] {5};  
\draw (3,1.5) node[right=1.3pt] {7};  
\draw (4,1.5) node[right=1.3pt] {8};  
\draw (4.9,1.5) node[right=0.3pt] {10};  
\draw (0,0.5) node[right=1.3pt] {3};  
\draw (1,0.5) node[right=1.3pt] {4};  
\draw (2,0.5) node[right=1.3pt] {6};  
\draw (3,0.5) node[right=1.3pt] {9};  
\draw (6.5,1) node {.};
\end{scope}
\end{tikzpicture}
\end{center}
\medskip
We leave the checking that this map is indeed bijective as a nice exercise.

As a final example involving SYT with 3 or fewer rows, by placing markings on peaks $\U\D$ (or valleys $\D\U$) of the Dyck paths, we obtain a class enumerated by the large (resp.\ small) Schr\"oder numbers. A similar result involving Schr\"oder paths can be found in Pechenik~\cite{Pec14}.

\begin{proposition}\label{prop:Schr_marked}
The number of Dyck paths of semilength $n$ with $k$ marked peaks (resp.\ valleys) equals the number of tableaux of shape $(n,n)$ with label set $\{1,\ldots,2n-k\}$ such that the rows are strictly increasing and the columns are weakly (resp.\ strictly) increasing.\end{proposition}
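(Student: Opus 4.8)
The plan is to extend the classical bijection (up-steps to the first row, down-steps to the second row) by using the marks to create \emph{repeated} entries. Starting from a Dyck path $D$ of semilength $n$ with $k$ marked peaks, I group the $2n$ steps into $2n-k$ \emph{tokens}: each unmarked step is its own token, while the two steps $\U\D$ of a marked peak are merged into a single token. Reading the tokens from left to right I label them $1,2,\dots,2n-k$, and I build a shape-$(n,n)$ array by placing, in left-to-right order, the label of each token carrying an up-step into the first row and the label of each token carrying a down-step into the second row. A merged (marked-peak) token carries both an up- and a down-step, so its label is written once in each row, whereas an unmarked up (resp.\ down) contributes a single entry to the first (resp.\ second) row. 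For the \emph{valley} version I do exactly the same, except that the two merged steps of a marked valley are $\D\U$ rather than $\U\D$. Since $k=0$ recovers the classical map verbatim, this is genuinely a modification of it.

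Because tokens are labeled in increasing order and each row is filled left to right, both rows are automatically strictly increasing, and every label in $\{1,\dots,2n-k\}$ is used, with exactly the $k$ merged tokens producing a repeated value (once in each row, since strict rows forbid a repeat within a row). The crux is the column condition, and this is exactly where the peak/valley asymmetry produces the large/small Schr\"oder dichotomy. The entry $r_i$ in row $1$, column $i$ is the label of the token containing the $i$th up-step, and the entry $s_i$ in row $2$, column $i$ is the label of the token containing the $i$th down-step. In any Dyck path the $i$th up-step precedes the $i$th down-step, and comparing the positions of the two tokens gives $r_i\le s_i$, with equality only when these two steps lie in a single token. For a marked peak $\U\D$ the shared up- and down-steps are the $i$th up and the $i$th down for a common $i$ precisely when the peak sits at height $1$ (its down-step is a return); thus marked peaks yield \emph{weakly} increasing columns, with equalities occurring exactly at marked returns, matching the large Schr\"oder tableaux. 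For a marked valley $\D\U$, however, the height is at least $1$ before its down-step, which forces the index of its up-step among up-steps to strictly exceed the index of its down-step among down-steps; the two steps therefore never occupy the same column, so $r_i<s_i$ for all $i$ and the columns are \emph{strictly} increasing, matching the small Schr\"oder tableaux.

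For the inverse I read an admissible tableau and recover the path. Each value in $\{1,\dots,2n-k\}$ appears once or twice: a value in the first row only becomes $\U$, a value in the second row only becomes $\D$, and a value appearing in both rows becomes a merged token, emitted as $\U\D$ in the peak case and as $\D\U$ in the valley case. Reading the values $1,2,\dots,2n-k$ in order and concatenating these steps produces a lattice path with $n$ up-steps and $n$ down-steps whose marked peaks (resp.\ valleys) are the merged tokens. That this path is a genuine Dyck path follows from the column inequalities: after the values $\le m$ are processed, the number of up-steps emitted is $\#\{i:r_i\le m\}$ and the number of down-steps is $\#\{j:s_j\le m\}$, and $r_i\le s_i$ (resp.\ $r_i<s_i$) yields the ballot inequality $\#\{i:r_i\le m\}\ge\#\{j:s_j\le m\}$; in the valley case one must also check the slightly tighter inequality at the intermediate moment right after a merged $\D$ has been emitted, which reduces to strictness of a single column. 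These two maps are visibly mutually inverse.

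The main obstacle is the column analysis of the second paragraph: making precise why merging a marked peak relaxes the column condition to weak monotonicity (with equalities pinned exactly to marked returns) while merging a marked valley preserves strictness. Once this height bookkeeping is settled, strictness of the rows, surjectivity onto $\{1,\dots,2n-k\}$, the ballot property of the inverse, and the fact that the two maps invert each other are all routine, and the enumeration by the large and small Schr\"oder numbers may be quoted as known.
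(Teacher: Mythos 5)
Your proposal is correct and takes essentially the same approach as the paper: the paper's proof modifies the classical bijection by inserting a box into \emph{both} rows simultaneously whenever a marked peak (resp.\ valley) is encountered, which is precisely your merged-token construction with a shared label. The only difference is level of detail --- the paper presents the map by example and simply asserts that the inverses are well defined, while you spell out the column analysis (equalities exactly at marked peaks starting at height zero, strictness forced for valleys) and the ballot-type verification of the inverse.
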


Adding over $k$, these are known to be enumerated by the large Schr\"oder numbers \oeis{A006318} (resp.\ small Schr\"oder numbers \oeis{A001003}).  In the first case, where the columns are weakly increasing, the tableau is the transpose of a semistandard Young tableau. In the second case, where the columns are strictly increasing, such a tableau is called an \emph{increasing tableau}.

As in the classical bijection, we read our Dyck path from left-to-right, and insert a box in the first row for an unmarked up step and in the second row for an unmarked down step. When we encounter a marked peak or valley, we insert a box in both rows simultaneously. For example:

\medskip
\begin{center}
\begin{tikzpicture}[scale=0.48]
	\begin{scope}
	\scriptsize
	\draw (0.7,0.8) node[left=1pt] {1};
	\draw (1.7,1.8) node[left=1pt] {2};
	\draw (2.3,1.8) node[right=1pt] {3};
	\draw (3.7,1.8) node[left=1pt] {4};
	\draw (4.7,2.8) node[left=1pt,color1] {\cbf{5}};
	\draw (5.3,2.8) node[right=1pt,color1] {\cbf{5}};
	\draw (6.7,2.8) node[left=1pt] {6};
	\draw (7.3,2.8) node[right=1pt] {7};
	\draw (8.3,1.8) node[right=1pt] {8};
	\draw (9.2,0.8) node[right=1pt] {9};
	\draw (10.8,0.8) node[left=1pt,color1] {\cbf{10}};
	\draw (11.3,0.8) node[right=1pt,color1] {\cbf{10}};
	\draw[pathcolorlight] (0,0) -- (12,0);
	\drawlinedots{0,1,2,3,4,5,6,7,8,9,10,11,12}%
	{0,1,2,1,2,3,2,3,2,1,0,1,0}
	\drawlinedots[pathmarked][nodemarked]{4,5,6}{2,3,2}
	\drawlinedots[pathmarked][nodemarked]{10,11,12}{0,1,0}
	\end{scope}
	
	\draw[->] (12.7,1) -- (13.7,1);
	
	\begin{scope}[xshift=145mm]
	\small
	\draw[step=1] (0,0) grid (6,2);
	\draw (0,1.5) node[right=1pt] {1};  
	\draw (0,0.5) node[right=1pt] {3};  
	\draw (1,1.5) node[right=1pt] {2};  
	\draw[color1] (1,0.5) node[right=1pt] {\cbf{5}};  
	\draw (2,1.5) node[right=1pt] {4};  
	\draw (2,0.5) node[right=1pt] {7};  
	\draw[color1] (3,1.5) node[right=1pt] {\cbf{5}};  
	\draw (3,0.5) node[right=1pt] {8};  
	\draw (4,1.5) node[right=1pt] {6};  
	\draw (4,0.5) node[right=1pt] {9};  
	\draw[color1] (4.85,1.5) node[right=1pt] {\cbf{10}};  
	\draw[color1] (4.85,0.5) node[right=1pt] {\cbf{10}};
	\draw (6.5,1) node {.};
	\end{scope}
\end{tikzpicture}
\end{center}
\medskip

To obtain tableaux with rows strictly increasing, we must avoid peaks at starting height zero. An alternative way to achieve this is is to use valleys instead, which never start at height zero. For example:

\medskip
\begin{center}
\begin{tikzpicture}[scale=0.48]
	\begin{scope}
	\scriptsize
	\draw (0.7,0.8) node[left=1pt] {1};
	\draw (1.7,1.8) node[left=1pt] {2};
	\draw (2.3,1.8) node[right=1pt,color1] {\cbf{3}};
	\draw (3.7,1.8) node[left=1pt,color1] {\cbf{3}};
	\draw (4.7,2.8) node[left=1pt] {4};
	\draw (5.3,2.8) node[right=1pt] {5};
	\draw (6.7,2.8) node[left=1pt] {6};
	\draw (7.3,2.8) node[right=1pt] {7};
	\draw (8.3,1.8) node[right=1pt] {8};
	\draw (9.3,0.8) node[right=1pt,color1] {\cbf{9}};
	\draw (10.7,0.8) node[left=1pt,color1] {\cbf{9}};
	\draw (11.3,0.8) node[right=1pt] {10};
	\draw[pathcolorlight] (0,0) -- (12,0);
	\drawlinedots{0,1,2,3,4,5,6,7,8,9,10,11,12}%
	{0,1,2,1,2,3,2,3,2,1,0,1,0}
	\drawlinedots[pathmarked][nodemarked]{2,3,4}{2,1,2}
	\drawlinedots[pathmarked][nodemarked]{9,10,11}{1,0,1}
	\end{scope}
	
	\draw[->] (12.7,1) -- (13.7,1);
	
	\begin{scope}[xshift=145mm]
	\small
	\draw[step=1] (0,0) grid (6,2);
	\draw (0,1.5) node[right=1pt] {1};  
	\draw[color1] (0,0.5) node[right=1pt] {\cbf{3}};  
	\draw (1,1.5) node[right=1pt] {2};  
	\draw (1,0.5) node[right=1pt] {5};  
	\draw[color1] (2,1.5) node[right=1pt] {\cbf{3}};  
	\draw (2,0.5) node[right=1pt] {7};  
	\draw (3,1.5) node[right=1pt] {4};  
	\draw (3,0.5) node[right=1pt] {8};  
	\draw (4,1.5) node[right=1pt] {6};  
	\draw[color1] (4,0.5) node[right=1pt] {\cbf{9}};  
	\draw[color1] (5,1.5) node[right=1pt] {\cbf{9}};  
	\draw (4.85,0.5) node[right=1pt] {10};
	\draw (6.5,1) node {.};
	\end{scope}
\end{tikzpicture}
\end{center}

\medskip
It is clear that these maps have well-defined inverses.

\section{cm-Labeled Dyck paths to SYT}
\label{sec:nxpt}

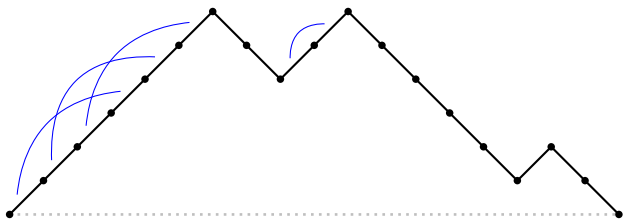
\begin{figure}[ht]
\begin{tikzpicture}[scale=0.45]
\begin{scope}
\draw[pathcolorlight] (-0.5,0) -- (17.5,0);
\drawlinedots{-0.5,0.5,1.5,2.5,3.5,4.5,5.5,6.5,7.5,8.5,9.5,10.5,11.5,12.5,13.5,14.5,15.5,16.5,17.5}%
{0,1,2,3,4,5,6,5,4,5,6,5,4,3,2,1,2,1,0}
\end{scope}
\begin{scope}[scale=1.44, color=color2]
\begin{scope}[xshift=-9mm, yshift=-3mm, rotate=45]
\draw[thick] (1,0) parabola bend (2.5,0.6) (4,0);
\draw[thick] (2,0) parabola bend (3.5,0.8) (5,0);
\draw[thick] (3,0) parabola bend (4.5,0.6) (6,0);
\end{scope}
\begin{scope}[xshift=47mm, yshift=25mm, rotate=45]
\draw[thick] (1,0) parabola bend (1.5,0.25) (2,0);
\end{scope}
\end{scope}
\end{tikzpicture}
\caption{A cm-labeled Dyck path.}
\label{fig:cm-labeled_dyck}
\end{figure}

As already mentioned, it is well-known that the \emph{full} set of Dyck paths of semilength $n$ is in bijection with the set of SYT of shape $(n,n)$, and we have seen several bijections from classes of Dyck paths to classes of SYT.  Focusing now on the \emph{full} set of SYT with $n$ boxes and no shape restriction, in this section we address the following question: Is there a class of Dyck paths that is in bijection with the set of SYT with $n$ boxes? Our answer, which is summarized in Corollary~\ref{cor:dyck_syt}, involves labeled Dyck paths, connected matchings, noncrossing partitions and nonnesting partitions.

We start by describing the combinatorial objects involved in our results. A graph on the set $[n]=\{1,2,\ldots,n\}$ is a \emph{partial matching} if every vertex has degree at most one. We will also refer to such graphs as \emph{involutions} since they are clearly in bijection with self-inverse permutations of $[n]$. We will call vertices of degree zero \emph{singletons}. A partial matching is a \emph{perfect matching} if every vertex has degree exactly one; note that the existence of a perfect matching implies that $n$ is even.  We will represent partial matchings by graphs on the number line with the edges drawn as arcs, with these arcs always drawn above the number line, as in Fig.~\ref{fig:partial_matchings}. A partial matching is a \emph{connected matching} if these arcs together with the $n$ points on the number line form a connected set as a subset of the plane.  For example, in Fig.~\ref{fig:partial_matchings}, the matching on the left is connected whereas the matching on the right has four connected components. Note that a partial matching on $[n]$ with $n > 1$ can only be connected if it is a perfect matching.  When $n=1$, we consider its unique partial matching (consisting of no arcs) to be connected.  

\begin{figure}[ht]
\begin{tikzpicture}[scale=0.55]
\begin{scope}
\foreach \x in {1,2,3,4,5,6,7,8}
\draw[fill=black] (\x,0) circle (0.08);
\foreach \x in {1,2,3,4,5,6,7,8}
\draw (\x,-0.4) node {\scriptsize \x};
\draw[thick] (1,0) parabola bend (3,0.8) (5,0);
\draw[thick] (2,0) parabola bend (5,1.2) (8,0);
\draw[thick] (3,0) parabola bend (4.5,0.6) (6,0);
\draw[thick] (4,0) parabola bend (5.5,0.6) (7,0);
\end{scope}
\begin{scope}[xshift=90mm]
\foreach \x in {1,2,3,4,5,6,7,8}
\draw[fill=black] (\x,0) circle (0.08);
\foreach \x in {1,2,3,4,5,6,7,8}
\draw (\x,-0.4) node {\scriptsize \x};
\draw[thick] (1,0) parabola bend (3.5,1) (6,0);
\draw[thick] (2,0) parabola bend (2.5,0.2) (3,0);
\draw[thick] (5,0) parabola bend (6,0.4) (7,0);
\end{scope}
\end{tikzpicture}
\caption{The partial matchings $(1\,5)(2\,8)(3\,6)(4\,7)$ and $(1\,6)(2\,3)(4)(5\,7)(8)$.}
\label{fig:partial_matchings}
\end{figure}
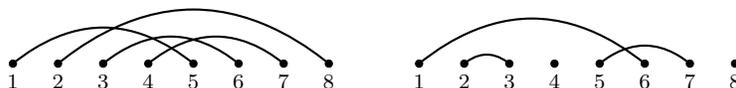

\begin{definition*}
A \emph{cm-labeled Dyck path} is a Dyck path where each $k$-ascent is labeled by a connected matching on $[k]$ (see Fig.~\ref{fig:cm-labeled_dyck} for an example).
\end{definition*}

First note there are no connected matchings on $[k]$ when $k$ is odd and greater than 1, so all the ascents in a cm-labeled Dyck path must be of even length or length 1. Secondly, a cm-labeled Dyck path all of whose ascents are length 1, 2 or 4 is equivalent to its unlabeled version since there is a unique connected matching on $[k]$ when $k = 1,2, 4$. The first interesting case is when a Dyck path has $6$-ascents, because then there are 4 ways to label each 6-ascent:
\medskip
\begin{center}
\begin{tikzpicture}[scale=0.6]
\begin{scope}
\draw[color2,thick] (1,0) parabola bend (2,0.4) (3,0);
\draw[color2,thick] (2,0) parabola bend (3.5,0.6) (5,0);
\draw[color2,thick] (4,0) parabola bend (5,0.4) (6,0);
\foreach \x in {1,2,3,4,5,6}
\draw[fill] (\x,0) circle (0.05);
\end{scope}
\begin{scope}[xshift=200pt]
\draw[color2,thick] (1,0) parabola bend (2.5,0.6) (4,0);
\draw[color2,thick] (2,0) parabola bend (4,0.8) (6,0);
\draw[color2,thick] (3,0) parabola bend (4,0.4) (5,0);
\foreach \x in {1,2,3,4,5,6}
\draw[fill] (\x,0) circle (0.05);
\end{scope}
\begin{scope}[yshift=-50pt]
\draw[color2,thick] (1,0) parabola bend (3,0.8) (5,0);
\draw[color2,thick] (2,0) parabola bend (3,0.4) (4,0);
\draw[color2,thick] (3,0) parabola bend (4.5,0.6) (6,0);
\foreach \x in {1,2,3,4,5,6}
\draw[fill] (\x,0) circle (0.05);
\end{scope}
\begin{scope}[xshift=200, yshift=-50pt]
\draw[color2,thick] (1,0) parabola bend (2.5,0.6) (4,0);
\draw[color2,thick] (2,0) parabola bend (3.5,0.6) (5,0);
\draw[color2,thick] (3,0) parabola bend (4.5,0.6) (6,0);
\foreach \x in {1,2,3,4,5,6}
\draw[fill] (\x,0) circle (0.05);
\draw (6.8,1) node {.};
\end{scope}
\end{tikzpicture}
\end{center}

\medskip
In a partial matching, two arcs $(i,j)$ and $(k,\ell)$ form a \emph{crossing} if $i<k<j<\ell$ or, equivalently, if the arcs cross in the graphical representation of the partial matching.  A \emph{$k$-crossing} is a set of $k$ arcs in a partial matching $M$ that are pairwise crossing, and the \emph{crossing number} of $M$ is the largest $k$ such that $M$ has a $k$-crossing. A partial matching is \emph{$k$-noncrossing} if it has no $k$-crossings. For example, the partial matching $(1\,5)(2\,8)(3\,6)(4\,7)$ on the left in Fig.~\ref{fig:partial_matchings} is 4-noncrossing and has crossing number 3 due to the arcs $(1\,5)(3\,6)(4\,7)$.

Analogously, two arcs $(i,j)$ and $(k,\ell)$ form a \emph{nesting} if $i<k<\ell<j$. A \emph{$k$-nesting} is a set of $k$ arcs in a partial matching that are pairwise nesting, with the \emph{nesting number} and \emph{$k$-nonnesting} defined in a way parallel to the analogous terms for crossings. For example, the partial matching $(1\,5)(2\,8)(3\,6)(4\,7)$ above is 3-nonnesting and has nesting number 2 due to the arcs $(2\,8)(3\,6)$ or $(2\,8)(4\,7)$.

Our bijection relies heavily on the following result of Burrill et al. \cite[Proposition~12]{BCFMM16}.

\begin{proposition}[\cite{BCFMM16}]
The following classes are in bijection:
\begin{enumerate}[$(i)$]
\item the set of $k$-noncrossing partial matchings on $[n]$ with $s$ singletons;
\item the set of $k$-nonnesting partial matchings on $[n]$ with $s$ singletons;
\item the set of involutions on $[n]$ with decreasing subsequences of length at most $2k-1$ and with $s$ fixed points;
\item the set of SYT with $n$ boxes, at most $2k-1$ rows, and $s$ odd columns.
\end{enumerate}
\end{proposition}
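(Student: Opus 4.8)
The plan is to identify all four classes by routing through the Robinson--Schensted correspondence for involutions, handling the chain $(i)\leftrightarrow(ii)\leftrightarrow(iii)\leftrightarrow(iv)$. The first observation is that a partial matching on $[n]$ is the same data as an involution $\pi$ of $[n]$: each arc $(i,j)$ is a transposition $(i\,j)$ of $\pi$ and each singleton is a fixed point. Thus $(i)$ and $(ii)$ are classes of involutions singled out by the crossing and nesting statistics of their arc diagrams, while $(iii)$ is the same underlying set equipped instead with a condition on decreasing subsequences. Throughout, the number of singletons, the number of fixed points, and the number of odd-length columns of an associated tableau will all be the common statistic $s$.

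For the link $(iii)\leftrightarrow(iv)$ I would invoke the classical Robinson--Schensted theory specialized to involutions. Since $\pi=\pi^{-1}$, the two tableaux produced coincide, so $\pi$ is carried bijectively to a single SYT $P$ with $n$ boxes. Schensted's theorem says the longest decreasing subsequence of $\pi$ has length equal to the number of rows of $P$, so the restriction ``decreasing subsequences of length at most $2k-1$'' becomes exactly ``at most $2k-1$ rows.'' Sch\"utzenberger's theorem says the number of fixed points of $\pi$ equals the number of odd-length columns of $P$, matching the parameter $s$. This step is entirely standard.

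The crux is the passage $(ii)\leftrightarrow(iii)$, which amounts to relating the nesting number of the arc diagram to the longest decreasing subsequence; here $(ii)$ and $(iii)$ describe the same set of involutions via the identity map once the conditions are shown equivalent. The key lemma I would prove is $\mathrm{ne}(\pi)=\lfloor \mathrm{lds}(\pi)/2\rfloor$, where $\mathrm{ne}$ and $\mathrm{lds}$ are the nesting number and the longest decreasing subsequence length. For the lower bound on $\mathrm{lds}$, a set of $m$ pairwise nesting arcs $(i_1,j_1)\supset\cdots\supset(i_m,j_m)$ gives the decreasing subsequence at positions $i_1<\cdots<i_m<j_m<\cdots<j_1$ of length $2m$. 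For the reverse, within any decreasing subsequence the openers are pairwise nesting (if $p<p'$ with $\pi(p)>\pi(p')$ and both openers, then $p<p'<\pi(p')<\pi(p)$), likewise the closers, and at most one fixed point can appear (two fixed points $f_1<f_2$ give the increasing value pair $f_1<f_2$); hence the length is at most $2\,\mathrm{ne}(\pi)+1$. Therefore $\mathrm{lds}(\pi)\in\{2\,\mathrm{ne}(\pi),\,2\,\mathrm{ne}(\pi)+1\}$, so $k$-nonnesting $(\mathrm{ne}\le k-1)$ is equivalent to $\mathrm{lds}\le 2k-1$, with the number of singletons untouched.

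Finally, $(i)\leftrightarrow(ii)$ I would obtain from the symmetry of the joint distribution of crossings and nestings of matchings due to Chen, Deng, Du, Stanley and Yan: encoding a matching as an oscillating sequence of partitions and conjugating every partition swaps the crossing and nesting numbers while fixing the singletons, hence sends $k$-noncrossing matchings bijectively onto $k$-nonnesting ones. I expect this symmetry to be the main obstacle. Unlike the Robinson--Schensted step, the crossing number cannot simply be read off the final insertion shape, since a fixed point can lengthen an increasing subsequence without creating any crossing (for example $(1\,4)(2\,5)$ with fixed points $3,6$ has crossing number $2$ but longest increasing subsequence $3$); establishing the symmetry genuinely requires the oscillating-tableau machinery rather than the ordinary tableau. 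Composing the three bijections yields the stated four-way equivalence.
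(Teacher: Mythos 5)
Your proposal is correct and follows essentially the same route as the paper, which presents the cited result as the chain (i)$\leftrightarrow$(ii) via the Chen--Deng--Du--Stanley--Yan oscillating-tableau transposition (adapted to preserve singletons), (ii)$\leftrightarrow$(iii) via the trivial identification of partial matchings with involutions, and (iii)$\leftrightarrow$(iv) via RSK. The one substantive addition on your side is that you actually prove the lemma $\mathrm{ne}(\pi)=\lfloor \mathrm{lds}(\pi)/2\rfloor$ making the (ii)$\leftrightarrow$(iii) step rigorous (the paper leaves this to the cited reference), and your argument for it is correct.
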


Using this result together with one of the standard bijections between Dyck paths and noncrossing partitions, we arrive at the following:
\begin{proposition}\label{prop:nxpt}
The number of cm-labeled Dyck paths of semilength $n$ with $s$ singletons and $k$-noncrossing labels equals the number of SYT with $n$ boxes, $s$ columns of odd length, and at most $2k-1$ rows.
\end{proposition}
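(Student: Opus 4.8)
The plan is to factor the desired equality through partial matchings and then invoke the cited proposition of Burrill et al. Concretely, I would construct an explicit bijection $\Phi$ from the set of cm-labeled Dyck paths of semilength $n$ to the set of partial matchings on $[n]$, arranged so that (a) the singletons of the path correspond to the isolated vertices of the matching, and (b) all labels being $k$-noncrossing corresponds to the whole matching being $k$-noncrossing. Granting such a $\Phi$, the proposition follows at once by composing with the bijection between items $(i)$ and $(iv)$ of the Burrill et al.\ proposition: $k$-noncrossing partial matchings on $[n]$ with $s$ singletons are thereby matched with SYT with $n$ boxes, at most $2k-1$ rows, and $s$ odd columns.

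To build $\Phi$, I would first fix one of the standard bijections $\psi$ between Dyck paths of semilength $n$ and noncrossing partitions of $[n]$ that sends the multiset of maximal ascent lengths to the multiset of block sizes; in particular a $j$-ascent is sent to a block of size $j$, and $1$-ascents correspond to singleton blocks. Given a cm-labeled Dyck path, I apply $\psi$ to its underlying path to obtain a noncrossing partition whose blocks are indexed by the ascents, each block having size equal to the length of its ascent. Onto each block of size $j$ I then transplant the connected matching on $[j]$ labeling the corresponding $j$-ascent, using the unique order-preserving identification of $[j]$ with the increasingly ordered elements of the block; the union of these arcs is a partial matching $\Phi$ of the path. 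For the inverse, starting from a partial matching $M$ I would take the partition of $[n]$ into the vertex sets of the connected components of $M$. Each component is by definition a connected matching, so every block has size $1$ or even, whence $\psi^{-1}$ returns a Dyck path all of whose ascents have length $1$ or even; the ascent labels are recovered by reading off $M$ restricted to each block.

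The two statistics transfer as required once we observe that the connected components of a partial matching always form a \emph{noncrossing} partition of the vertex set: if two arcs lay in different components yet crossed, they would in fact be connected, a contradiction. Consequently no arc of one block crosses an arc of another block, so every $k$-crossing of $M$ is confined to a single block; since an order-preserving relabeling preserves crossings, the crossing number of $M$ equals the maximum crossing number among the labels, giving (b). For (a), the $1$-ascents are exactly the singleton blocks, which become precisely the degree-zero vertices of $M$, so the number of path singletons equals the number $s$ of isolated vertices of $M$.

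I expect the main obstacle to be verifying the structural facts that make $\Phi$ well defined and statistic-preserving, namely that the connected components of a partial matching form a noncrossing partition (so that blocks do not interleave, the inverse map lands in genuine cm-labeled Dyck paths, and crossings are block-local) and that these block-local crossings control the global crossing number. Once this is in hand, the remaining checks—that $\psi$ can be chosen to match ascent lengths with block sizes and singletons with singletons, and that transplanting labels is order-preserving—are routine, and the proposition reduces to a direct application of the Burrill et al.\ equivalence $(i)\Leftrightarrow(iv)$.
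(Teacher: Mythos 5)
Your proposal is correct and follows essentially the same route as the paper: the paper also builds the bijection by numbering the up-steps so that the ascents' label sets form the noncrossing partition of the underlying Dyck path, transplants each connected-matching label onto its block (so singletons correspond to singletons and crossings are block-local), and then composes with the Burrill et al.\ chain from $k$-noncrossing partial matchings to SYT with at most $2k-1$ rows and $s$ odd columns. The only difference is presentational: the paper walks through the internals of the Burrill et al.\ bijection (oscillating tableaux, transposition, RSK) by example, whereas you invoke it as a black box, which suffices.
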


Let $\syt{n}$ denote the number of SYT with $n$ boxes (cf.\ \oeis{A000085}). Letting $k$ be sufficiently large and summing over $s$, Proposition~\ref{prop:nxpt} yields:

\begin{corollary}\label{cor:dyck_syt}
The number of cm-labeled Dyck paths of semilength $n$ equals $\syt{n}$.
\end{corollary}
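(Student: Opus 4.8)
The plan is to deduce the Corollary directly from Proposition~\ref{prop:nxpt} by choosing the crossing parameter $k$ large enough that \emph{both} restrictions appearing in that proposition become vacuous, and then summing over the singleton statistic $s$. Since Proposition~\ref{prop:nxpt} already carries all of the combinatorial content, the entire task reduces to checking that a single choice of $k$ simultaneously removes the row bound on the tableau side and the noncrossing bound on the path side.

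First I would fix a threshold for $k$. On the tableau side, an SYT with $n$ boxes has at most $n$ rows, so the restriction ``at most $2k-1$ rows'' imposes nothing as soon as $2k-1 \ge n$. On the path side, I claim the restriction ``$k$-noncrossing labels'' is likewise vacuous for large $k$. Each label on a cm-labeled Dyck path of semilength $n$ is a connected matching on $[m]$ for some ascent length $m \le n$, and a matching on $[m]$ has at most $\lfloor m/2\rfloor \le \lfloor n/2\rfloor$ arcs; hence its crossing number is at most $\lfloor n/2\rfloor$. Consequently every such label is automatically $k$-noncrossing once $k > \lfloor n/2\rfloor$. Taking $k \ge n$ satisfies both inequalities at once.

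Next I would sum the equality of Proposition~\ref{prop:nxpt} over all $s$ for this fixed large $k$. The left-hand count then becomes the number of \emph{all} cm-labeled Dyck paths of semilength $n$: every such path has $k$-noncrossing labels by the bound above, and summing over $s$ includes every possible number of singleton ascents. The right-hand count becomes the number of \emph{all} SYT with $n$ boxes: the row bound is inactive, and summing over $s$ ranges over every possible number of odd columns. Equating the two totals gives exactly $\syt{n}$, yielding the Corollary.

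The only point requiring care, and hence the mild ``obstacle,'' is the bookkeeping that one value of $k$ neutralizes both conditions; this rests entirely on the elementary arc-count bound $\lfloor m/2\rfloor$ for a connected matching living on a single ascent, together with the observation that ascent lengths are bounded by $n$. I would state this bound explicitly and then invoke Proposition~\ref{prop:nxpt} to finish.
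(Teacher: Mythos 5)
Your proof is correct and takes essentially the same route as the paper, which obtains Corollary~\ref{cor:dyck_syt} precisely by letting $k$ be sufficiently large in Proposition~\ref{prop:nxpt} and summing over $s$. Your only addition is to make the threshold explicit (e.g.\ $k \ge n$ simultaneously voids the bound of $2k-1$ rows and, via the arc-count bound $\lfloor m/2\rfloor$ on each label, the $k$-noncrossing condition), a detail the paper leaves implicit in the phrase ``sufficiently large.''
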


The proof of Proposition~\ref{prop:nxpt} consists of several bijective steps: from cm-labeled Dyck paths to $k$-noncrossing partial matchings to $k$-nonnesting partial matchings to involutions, and finally to SYT via the Robinson--Schensted--Knuth (RSK) algorithm. We proceed to illustrate this elaborate construction by means of an example.

Consider the cm-labeled Dyck path $D$ depicted in Fig.~\ref{fig:cm-labeled_dyck}. Number the up-steps in the following fashion. First number the down-steps with $\{1,\dots,9\}$ in increasing order from left-to-right. Then move each such label horizontally to the left until it meets its corresponding up-step, resulting in a labeling on the up-steps:

\medskip
\begin{center}
\begin{tikzpicture}[scale=0.45]
\begin{scope}
\scriptsize
	\draw (5.25,5.25) node {1};
	\draw (4.25,4.25) node {2};
	\draw (3.25,3.25) node {5};
	\draw (2.25,2.25) node {6};
	\draw (1.25,1.25) node {7};
	\draw (0.25,0.25) node {9};
	\draw (9.25,5.25) node {3};
	\draw (8.25,4.25) node {4};
	\draw (15.25,1.25) node {8};
\draw[pathcolorlight] (-0.5,0) -- (17.5,0);
\drawlinedots{-0.5,0.5,1.5,2.5,3.5,4.5,5.5,6.5,7.5,8.5,9.5,10.5,11.5,12.5,13.5,14.5,15.5,16.5,17.5}%
{0,1,2,3,4,5,6,5,4,5,6,5,4,3,2,1,2,1,0}
\draw (19,3) node {.};
\end{scope}
\begin{scope}[scale=1.44, color=color2]
\begin{scope}[xshift=-9mm, yshift=-3mm, rotate=45]
\draw[thick] (1,0) parabola bend (2.5,0.6) (4,0);
\draw[thick] (2,0) parabola bend (3.5,0.8) (5,0);
\draw[thick] (3,0) parabola bend (4.5,0.6) (6,0);
\end{scope}
\begin{scope}[xshift=47mm, yshift=25mm, rotate=45]
\draw[thick] (1,0) parabola bend (1.5,0.25) (2,0);
\end{scope}
\end{scope}
\end{tikzpicture}
\end{center}

The partial matching $M_{D}$ associated with $D$ is obtained by applying the connected matching on each ascent to the ascent's numbers; see Fig.~\ref{fig:oscillating_bijection}. 

\medskip
\begin{figure}[ht]
\begin{tikzpicture}[scale=0.6]
\foreach \x in {1,2,3,4,5,6,7,8,9}
\draw[fill=black] (\x,0) circle (0.08);
\foreach \x in {1,2,3,4,5,6,7,8,9}
\draw (\x,-0.4) node {\scriptsize \x};
\draw[thick] (1,0) parabola bend (3.5,1) (6,0);
\draw[thick] (2,0) parabola bend (4.5,1) (7,0);
\draw[thick] (3,0) parabola bend (3.5,0.2) (4,0);
\draw[thick] (5,0) parabola bend (7,0.8) (9,0);
\end{tikzpicture}
\caption{The partial matching $M_D=(1\,6)(2\,7)(3\,4)(5\,9)(8)$.}
\label{fig:oscillating_bijection}
\end{figure}
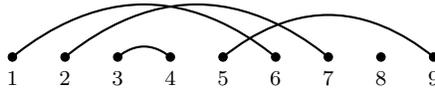

Note that if the cm-labels are $k$-noncrossing, so is the partial matching, and clearly the number of singletons on the Dyck path equals the number of singletons in the matching. Also, the connected components of $M_D$ correspond to the ascents of $D$.

The step from $k$-noncrossing to $k$-nonnesting partial matchings works by modifying a known bijection between perfect matchings and oscillating tableaux. We follow a technique from \cite{C+07} (see also \cite{BCFMM16}) by first mapping a partial matching to an oscillating tableau, then transposing the tableau, and then mapping the result back to a partial matching. The modified map is constructed so as to preserve the number of singletons. We restrict our attention to \emph{weakly oscillating tableau} of empty shape and length $n$, that is, a sequence of partitions $\Lambda = (\lambda^0, \lambda^1, \ldots, \lambda^n)$ such that:
\begin{enumerate}[$(i)$]
\item $\lambda^0 = \lambda^n = \emptyset$, the empty partition;
\item for $1\leq i \leq n$, $\lambda^i$ is obtained from $\lambda^{i-1}$ by either doing nothing, adding a box, or deleting a box.  
\end{enumerate}
Given a partial matching $M$ on $[n]$, represented as a graph on the number line, we construct a sequence of tableaux $T^n,\dots, T^0$ as follows. We begin by setting $T^n=\emptyset$, the empty tableau. For $n \geq j \geq 1$, construct $T^{j-1}$ according to the following rules.
\begin{enumerate}
\item If $j$ is a singleton in $M$, then set $T^{j-1}=T^j$.
\item If $j$ is the right-hand endpoint of an arc $(i,j)$ in $M$, then RSK insert\footnote{%
See~\cite[\S3.1]{Sag01} or \cite[\S7.11]{Sta99} for an introduction to RSK insertion.} $i$ into $T^j$.
\item If $j$ is the left-hand endpoint of an arc $(j,k)$ in $M$, then remove $j$ (and the box that contained $j$) from $T^j$.  
\end{enumerate}

For the partial matching $M_D = (1\,6)(2\,7)(3\,4)(5\,9)(8)$, the sequence $T^0, \dots, T^9$ and the resulting weakly oscillating tableau $\Lambda = (\lambda^0, \ldots, \lambda^9)$ are displayed on Table~\ref{tab:oscillating_bijection}. Recall that the construction of the $T^j$ proceeds from right to left, and that $T^{j-1}$ is determined by the properties of the number $j$, rather than of $j-1$.  

\begin{table}[ht]
\small
\Yvcentermath1
\[
\begin{array}{c|@{\hspace{2ex}}cccccccccc}
j & 0 & 1 & 2 & 3 & 4 & 5 & 6 & 7 & 8 & 9 \\[2pt] \hline \\[-4pt]
T^j & \emptyset & \young(1) & \young(1,2) & \young(13,2) & \young(1,2) & \young(1,2,5) & \young(2,5) & \young(5) & \young(5) & \emptyset \\[4ex]
\lambda^j & \emptyset & \yng(1) & \yng(1,1) & \yng(2,1) & \yng(1,1) & \yng(1,1,1) & \yng(1,1) & \yng(1) & \yng(1) & \emptyset
\end{array}
\]
\smallskip
\caption{Sequences corresponding to $M_D = (1\,6)(2\,7)(3\,4)(5\,9)(8)$.}
\label{tab:oscillating_bijection}
\end{table}

Conversely, given an oscillating tableau $\Lambda= (\lambda^0, \ldots, \lambda^n)$, set $(T^0, M^0) = (\emptyset, \emptyset)$ and, for $1 \leq j \leq n$, construct $(T^j, M^j)$ from left to right according to the following rules:
\begin{enumerate}
\item If $\lambda^j=\lambda^{j-1}$, then set $(T^j, M^j) = (T^{j-1}, M^{j-1})$.
\item If $\lambda^j \subset \lambda^{j-1}$, then obtain $T^j$ from $T^{j-1}$ by reverse RSK insertion, starting with the entry $k$ in the box in position $\lambda^j \setminus \lambda^{j-1}$. This will result in an entry $i \leq k$ leaving $T^{j-1}$. Add the pair $(i,j)$ to $M^{j-1}$ to obtain $M^j$.
\item If $\lambda^j \supset \lambda^{j-1}$, let $T^j$ be obtained from $T^{j-1}$ by adding the box $\lambda^j\setminus \lambda^{j-1}$ with entry $j$, and simply let $M^j = M^{j-1}$.
\end{enumerate}
The image of $\Lambda$ is then the partial matching $M^n$.

With this bijection in place, the composite bijection from $k$-noncrossing partial matchings to $k$-nonnesting partial matchings is given by
\[ M \mapsto \Lambda \mapsto \Lambda^t \mapsto \widehat{M}, \]
where $\Lambda^t := \big((\lambda^0)^t, \ldots, (\lambda^n)^t\big)$ is the weakly oscillating tableau obtained by transposing the partitions from $\Lambda$, and $\widehat{M}$ is the partial matching resulting from the inverse map above applied to $\Lambda^t$.

We leave it as an exercise for the reader to check that 
\[ \widehat{M_D} = (1\,9)(2\,4)(3\,7)(5\,6)(8). \] 
Observe in Fig.~\ref{fig:crossing_nesting} that $M_D$ has a 3-crossing and a 2-nesting, whereas $\widehat{M_D}$ has a 2-crossing and a 3-nesting (cf.\ \cite[Thm.~3.2]{C+07}). Moreover, they both have the same number of singletons. 

\begin{figure}[ht]
\begin{tikzpicture}[scale=0.5]
\begin{scope}
\foreach \x in {1,2,3,4,5,6,7,8,9}
\draw[fill=black] (\x,0) circle (0.08);
\foreach \x in {1,2,3,4,5,6,7,8,9}
\draw (\x,-0.4) node {\scriptsize \x};
\draw[thick] (1,0) parabola bend (3.5,1) (6,0);
\draw[thick] (2,0) parabola bend (4.5,1) (7,0);
\draw[thick] (3,0) parabola bend (3.5,0.2) (4,0);
\draw[thick] (5,0) parabola bend (7,0.8) (9,0);
\end{scope}
\begin{scope}[xshift=28em]
\foreach \x in {1,2,3,4,5,6,7,8,9}
\draw[fill=black] (\x,0) circle (0.08);
\foreach \x in {1,2,3,4,5,6,7,8,9}
\draw (\x,-0.4) node {\scriptsize \x};
\draw[thick] (1,0) parabola bend (5,1.2) (9,0);
\draw[thick] (2,0) parabola bend (3,0.4) (4,0);
\draw[thick] (3,0) parabola bend (5,0.8) (7,0);
\draw[thick] (5,0) parabola bend (5.5,0.2) (6,0);
\end{scope}
\end{tikzpicture}
\caption{The associated partial matchings $M_D$ and $\widehat{M_D}$.}
\label{fig:crossing_nesting}
\end{figure}
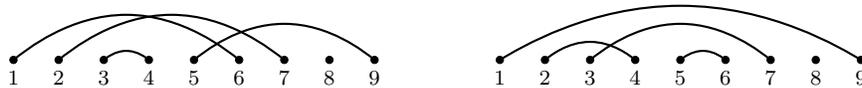

Treating the arcs as transpositions, partial matchings $M$ on $[n]$ are trivially in one-to-one correspondence with involutions $\pi$ on $[n]$, and the number of singletons in $M$ clearly equals the number of fixed points of $\pi$. For example, $\widehat{M_D} = (1\,9)(2\,4)(3\,7)(5\,6)(8)$ corresponds to the involution $\pi_D=947265381$. Now, using the RSK algorithm on the involution $\pi_D$, we finally get the SYT $T_D$ corresponding to the cm-labeled Dyck path $D$ (see Fig.~\ref{fig:syt}).

\medskip
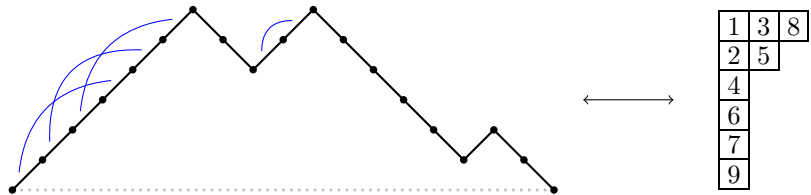
\begin{figure}[ht]
\begin{tikzpicture}[scale=0.4]
\begin{scope}
\draw[pathcolorlight] (-0.5,0) -- (17.5,0);
\drawlinedots{-0.5,0.5,1.5,2.5,3.5,4.5,5.5,6.5,7.5,8.5,9.5,10.5,11.5,12.5,13.5,14.5,15.5,16.5,17.5}%
{0,1,2,3,4,5,6,5,4,5,6,5,4,3,2,1,2,1,0}
\end{scope}
\begin{scope}[scale=1.44, color=color2]
\begin{scope}[xshift=-9mm, yshift=-3mm, rotate=45]
\draw[thick] (1,0) parabola bend (2.5,0.6) (4,0);
\draw[thick] (2,0) parabola bend (3.5,0.8) (5,0);
\draw[thick] (3,0) parabola bend (4.5,0.6) (6,0);
\end{scope}
\begin{scope}[xshift=47mm, yshift=25mm, rotate=45]
\draw[thick] (1,0) parabola bend (1.5,0.25) (2,0);
\end{scope}
\end{scope}
\begin{scope}[xshift=48em]
\small
\draw[<->] (1,3) -- (3.2,3); 
\node at (6,3){$\young(138,25,4,6,7,9)$}; 
\end{scope}
\end{tikzpicture}
\caption{The cm-labeled Dyck path $D$ and its corresponding SYT $T_D$.}
\label{fig:syt}
\end{figure}

\section{Dyck paths to SYT of rectangular shape} 
\label{sec:rectangular}

In a recent paper, Wettstein \cite{Wett17} discussed certain sets of balanced bracket expressions that are enumerated by the $d$-dimensional Catalan numbers, and he introduced a class of prime elements that serve as building blocks for the entire set of such expressions. If $C_d(x)$ and $P_d(x)$ are the generating functions for these sets (with $C_d(0)=P_d(0)=1$), respectively, Wettstein proved the relation
\begin{equation}\label{eq:d-Catalan}
  C_d(x) = P_d\left(xC_d(x)^d\right) \,\text{ for every $d\ge 2$.}
\end{equation} 
By \cite[Example~14]{BGW18}, this means that $C_d(x)$ is the $d$-th noncrossing partition transform of $P_d(x)$ (cf.\ Subsection~\ref{subsec:noncrossing}), which provides a way to bijectively connect SYT of shape $(n^d)$ with Dyck paths. 

We proceed to elaborate on this bijective connection.
\begin{definition}
For $d,n\in \mathbb{N}$ and $d\ge 2$, let $\mathcal{W}_d(n)$ be the set of words $w$ of length $d\!\cdot\!n$ over the alphabet $\{a_1,a_2,\dots,a_d\}$ with $\#(w,a_1)=\cdots=\#(w,a_d)$, and such that for every prefix $u$ of $w$ we have
\[ \#(u,a_1)\ge \#(u,a_2) \ge \cdots \ge \#(u,a_d), \]
where $\#(z,\ell)$ denotes the number of times the letter $\ell$ appears in the word $z$. Further let $\widetilde{\mathcal{W}}_d(n)$ be the set of corresponding primitives (factor-free) words, i.e.\ words in $\mathcal{W}_d(n)$ that do not contain any nonempty contiguous subword in $\mathcal{W}_d(j)$ for $j<n$.
\end{definition}

\begin{proposition}
$\mathcal{W}_d(n)$ is in bijection with the set of SYT of shape $(n^d)$ and their elements are enumerated by the $d$-dimensional Catalan numbers. Moreover, by \cite[Lemma~4.3]{Wett17}, the set $\widetilde{\mathcal{W}}_d(n)$ of primitive elements is enumerated by the function $P_d(x)$ satisfying \eqref{eq:d-Catalan}.
\end{proposition}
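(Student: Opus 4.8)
The plan is to exhibit a direct bijection between $\mathcal{W}_d(n)$ and the set of SYT of shape $(n^d)$ via the classical row-recording (lattice-word) correspondence, deduce the enumeration from the known count of rectangular SYT, and then identify the primitive words with Wettstein's prime elements in order to invoke \cite[Lem.~4.3]{Wett17} for the generating-function statement.

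First I would define the map. Given $w = w_1 w_2 \cdots w_{dn} \in \mathcal{W}_d(n)$, read the letters from left to right, and whenever $w_j = a_i$, place the entry $j$ in the leftmost empty box of row $i$ of a $d$-rowed diagram. Conversely, from an SYT of shape $(n^d)$, record for each entry $j$ the index $i$ of the row containing it and set $w_j = a_i$. The substance of the argument is to show that the defining prefix inequalities of $\mathcal{W}_d(n)$ are \emph{exactly} equivalent to the output being a valid SYT. Since the entries $1, 2, \dots, dn$ are inserted in increasing order and each new box is appended at the right end of its row, the rows are automatically strictly increasing. Thus it suffices to verify that every intermediate shape is a partition, which is precisely the statement that $\#(u,a_1) \ge \#(u,a_2) \ge \cdots \ge \#(u,a_d)$ for every prefix $u$. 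I would then observe that column-strictness comes for free: the box in position $(i,c)$ is created when $\#(\cdot,a_i)$ reaches $c$, and at that moment the partition condition forces $\#(\cdot,a_{i-1}) \ge c$, so the box $(i-1,c)$ directly above was filled at an earlier step and hence carries a smaller entry. The content condition $\#(w,a_i)=n$ for all $i$ is equivalent to the terminal shape being the rectangle $(n^d)$, so the two maps are mutually inverse bijections.

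For the enumeration, I would invoke the standard fact that the number of SYT of rectangular shape $(n^d)$ is the $d$-dimensional Catalan number, either as the tableau-theoretic definition of these numbers or directly from the hook-length formula; hence $\lvert \mathcal{W}_d(n) \rvert$ equals the $d$-dimensional Catalan number. Finally, for the primitive words I would note that the factor-free condition defining $\widetilde{\mathcal{W}}_d(n)$ coincides with Wettstein's notion of a prime balanced bracket expression, so that each word of $\mathcal{W}_d(n)$ factors uniquely as a concatenation of such primes; the enumerator $P_d(x)$ of $\widetilde{\mathcal{W}}_d(n)$ and its relation to $C_d(x)$ then follow verbatim from \cite[Lem.~4.3]{Wett17}.

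The step I expect to be the main obstacle is making the equivalence ``prefix inequalities $\Leftrightarrow$ valid SYT'' fully airtight, in particular the bookkeeping relating the column index of each newly inserted box to the prefix letter-counts and confirming that column-strictness is a consequence rather than an additional hypothesis. A secondary, more notational, obstacle is checking that the factor-free decomposition used in the definition of $\widetilde{\mathcal{W}}_d(n)$ matches Wettstein's prime decomposition precisely, so that \cite[Lem.~4.3]{Wett17} applies without modification.
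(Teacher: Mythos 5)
Your bijection is exactly the one the paper intends: the paper calls it ``simple and well-known'' and writes out the tableau-to-word direction at the start of the proof of Theorem~\ref{thm:dCatalan}, and your verification that the prefix inequalities are equivalent to every intermediate shape being a partition (rows increasing automatically, column-strictness forced by the partition condition) is precisely the standard argument the paper omits. The enumeration via the hook-length formula and the appeal to \cite[Lem.~4.3]{Wett17} for $\widetilde{\mathcal{W}}_d(n)$ also match the paper, which carries the ``Moreover'' clause purely by citation.

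One sentence of yours is mathematically wrong, however, even though it is inessential once the citation does the work: words of $\mathcal{W}_d(n)$ do \emph{not} factor uniquely as concatenations of primes. Concatenation-indecomposable and prime (factor-free) are different notions here. For instance, $aabcbc \in \mathcal{W}_3(2)$ has no proper nonempty balanced prefix, so it is not a concatenation of shorter balanced words at all; yet it is not prime, since it contains the factor $abc$ (the paper itself flags this word for exactly that reason). The structure underlying
\[
C_d(x) = P_d\left(xC_d(x)^d\right)
\]
is substitution rather than concatenation: a word of size $n$ arises uniquely from a prime of some size $j$ by inserting an arbitrary (possibly empty) balanced word after each of its $dj$ letters, which is why the argument of $P_d$ is $xC_d(x)^d$. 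If unique factorization by concatenation held, one would instead obtain a relation of the form $C_d(x) = 1/\bigl(1-Q_d(x)\bigr)$ for the generating function $Q_d$ of concatenation-indecomposables. So the ``secondary obstacle'' you flag --- matching the factor-free condition to Wettstein's primes --- is real, but the decomposition you must match it against is this inflation decomposition, not a concatenation one.
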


The bijection between $\mathcal{W}_d(n)$ and the set of SYT of shape $(n^d)$ is simple and well-known, and is given at the start of the proof of Theorem~\ref{thm:dCatalan} below.

\begin{example}[$d=3$]
The 3-dimensional Catalan numbers \oeis{A005789} are given by
\[ 1, 1, 5, 42, 462, 6006, 87516, 1385670, 23371634, 414315330, \dots \]
and the corresponding coefficients of $P_3(x)$ are (cf.\ \oeis{A268538})
\[ 1, 1, 2, 12, 107, 1178, 14805, 203885, 3002973, 46573347, \dots \]
For example, using the alphabet $\{a,b,c\}$ we have that $\mathcal{W}_3(2)$ consists of the five words
\[ aabcbc\quad ababcc\quad abcabc\quad aabbcc\quad abacbc \]
that correspond to the SYT

{\small
\begin{equation*}
\Yvcentermath1
\young(12,35,46)\qquad \young(13,24,56)\qquad \young(14,25,36)\qquad \young(12,34,56)\qquad   \young(13,25,46) \ \ .
\end{equation*}
}
Note that $aabbcc$ and $abacbc$ are the only words in $\widetilde{\mathcal{W}}_3(2)$ (primitives of length 6). In fact, the other three words $a\cbfit{abc}bc$, $ab\cbfit{abc}c$, and $abc\cbfit{abc}$ all contain the factor $abc$ as a subword. On the other hand, the set $\widetilde{\mathcal{W}}_3(3)$ consists of 12 elements:
\begin{gather*}
 aaabbbccc\quad aaabbcbcc\quad aababbccc\quad aabbacbcc\quad aabacbbcc\quad aabbaccbc \\
 aabbcacbc\quad abacabbcc\quad abacbacbc\quad abaacbbcc\quad abaacbcbc\quad ababaccbc
\end{gather*} 
corresponding to
 
{\small
\begin{gather*}
\Yvcentermath1
 \young(123,456,789)\qquad  \young(123,457,689)\qquad  \young(124,356,789)\qquad  \young(125,347,689)\qquad  \young(124,367,589)\qquad  \young(125,348,679) \ \ \ \\[3pt] 
\Yvcentermath1
 \young(126,348,579)\qquad  \young(135,267,489)\qquad  \young(136,258,479)\qquad  \young(134,267,589)\qquad  \young(134,268,579)\qquad  \young(135,248,679) \ \ .
\end{gather*}
}
\end{example}

\begin{theorem} \label{thm:dCatalan}
The set of SYT of shape $(n^d)$ is in bijection with the set $\mathfrak{D}_n^{\bf p}(d,0)$ of Dyck paths of semilength $d\!\cdot\!n$ created from strings of the form $\D$ and $\U^{d\cdot j}\D$ for $j=1,\ldots, n$, and such that each $d\!\cdot\!j$-ascent may be labeled in $p_j$ different ways. Here $\mathbf{p}=(p_n)$ denotes the sequence of coefficients of $P_d(x)$, called $d$-dimensional prime Catalan numbers in \cite{Wett17}.
\end{theorem}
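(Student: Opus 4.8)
The plan is to establish the bijection by composing two maps whose validity rests on results already quoted in the excerpt. First I would recall the standard bijection between $\mathcal{W}_d(n)$ and SYT of shape $(n^d)$: reading the word $w\in\mathcal{W}_d(n)$ from left to right, the $i$th occurrence of the letter $a_r$ places the entry (its position in $w$) into the $r$th row of the tableau. The prefix condition $\#(u,a_1)\ge\cdots\ge\#(u,a_d)$ is exactly the requirement that at every stage the partial filling is a valid skew shape with rows weakly decreasing in length, so that entries increase along rows and down columns; this yields an SYT of shape $(n^d)$, and the map is plainly reversible. By the previous proposition this already identifies SYT of shape $(n^d)$ with $\mathcal{W}_d(n)$, so it remains to biject $\mathcal{W}_d(n)$ with $\mathfrak{D}_n^{\bf p}(d,0)$.

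The heart of the argument is the combinatorial meaning of the functional equation \eqref{eq:d-Catalan}, $C_d(x)=P_d\!\left(xC_d(x)^d\right)$. I would interpret this as the statement that every word $w\in\mathcal{W}_d(n)$ factors \emph{uniquely} as a concatenation of primitive blocks, where each primitive block is an element of some $\widetilde{\mathcal{W}}_d(j)$ with its interior slots independently refilled by arbitrary elements of $\mathcal{W}_d$. Concretely, each maximal primitive factor of length $d\cdot j$ contributes a factor of $xC_d(x)^d$ evaluated appropriately: the $d$ exponents in $C_d(x)^d$ record how the $d$ ``descents'' of the primitive block can be padded with $d$ further (possibly empty) elements of $\mathcal{W}_d$, and summing over the $p_j$ primitive shapes of each length recovers $P_d$. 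This is precisely the content of \cite[Example~14]{BGW18} cited in the excerpt, which tells us $C_d$ is the $d$-th noncrossing partition transform of $P_d$.

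Given this, I would build the Dyck path directly from the primitive-factorization tree of $w$. To each primitive block of length $d\cdot j$ I associate the building string $\U^{d\cdot j}\D$, and I record which of the $p_j$ primitive elements of $\widetilde{\mathcal{W}}_d(j)$ it is as the label on that $d\!\cdot\!j$-ascent; the recursive nesting of subwords inside the $d$ slots of each primitive block is encoded by the recursive nesting of Dyck subpaths, with the bare $\D$ strings separating the blocks at each level. The semilength works out to $d\cdot n$ since each $a_r$-letter contributes one unit of height. The forward and reverse directions are mutually inverse because both the primitive factorization of words in $\mathcal{W}_d(n)$ and the decomposition of a path in $\mathfrak{D}_n^{\bf p}(d,0)$ into blocks $\U^{d\cdot j}\D$ separated by $\D$'s are unique.

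The main obstacle will be making precise how the $d$-fold substitution $xC_d(x)^d$ in \eqref{eq:d-Catalan} corresponds to a genuine combinatorial decomposition of words, i.e.\ exhibiting the canonical way to carve an arbitrary $w\in\mathcal{W}_d(n)$ into one outermost primitive block together with $d$ independently-chosen subwords, and verifying that this carving matches the recursive structure of the paths in $\mathfrak{D}_n^{\bf p}(d,0)$ with their labeled ascents. Once that dictionary between the algebraic functional equation and the word/path decomposition is pinned down, checking that the two constructions invert each other is routine.
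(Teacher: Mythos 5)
Your overall route is the same as the paper's: pass from the SYT to its row-word in $\mathcal{W}_d(n)$, then encode the primitive (factor-free) decomposition of that word underlying Wettstein's identity \eqref{eq:d-Catalan} as a Dyck path built from the strings $\D$ and $\U^{d\cdot j}\D$, with each ascent labeled by the primitive element it came from. The paper realizes this by an iterative right-to-left extraction of primitive contiguous factors, recording the prefix length at each extraction; your recursive ``skeleton plus slots'' description, when unwound, yields the same path (the extracted factors are exactly the innermost insertions, and the recorded prefix lengths reproduce the nesting), so the two constructions agree in substance.

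There is, however, one substantive error in your dictionary between \eqref{eq:d-Catalan} and the word decomposition, and it sits precisely at the step you flag as the main obstacle. You say, repeatedly, that a primitive block comes with $d$ slots ($d$ ``descents'' padded by $d$ elements of $\mathcal{W}_d$; ``one outermost primitive block together with $d$ independently-chosen subwords''). That is correct only for primitive blocks of size $j=1$ such as $abc$. In general a primitive block in $\widetilde{\mathcal{W}}_d(j)$ has length $d\cdot j$ and must carry $d\cdot j$ slots, one per letter: the substitution in \eqref{eq:d-Catalan} is applied to the variable marking one unit of size, so a primitive of size $j$ contributes $x^j C_d(x)^{d\cdot j}$, not $x^j C_d(x)^{d}$. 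The path statistics force the same count: after a block $\U^{d\cdot j}\D$ one needs $d\cdot j-1$ further bare $\D$'s to return to its starting level, so there are exactly $d\cdot j$ descent positions into which sub-paths can be nested. With only $d$ slots your map cannot be surjective; for instance, with $d=3$ the word $abacbabcc\in\mathcal{W}_3(3)$ is the skeleton $abacbc\in\widetilde{\mathcal{W}}_3(2)$ with $abc$ inserted after its fifth letter, which requires slot $5$ of the $6$ available, and indeed counting with $d$ slots per block gives $36$ rather than $|\mathcal{W}_3(3)|=42$. Once you replace ``$d$ slots per block'' by ``one slot per letter, i.e.\ $d$ slots per unit of size,'' your plan goes through and coincides with the paper's proof.
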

\begin{proof}
This could be proved using \eqref{eq:d-Catalan} together with results from \cite{BGMW16,BGW18}, but here we give a bijective proof by example.  Consider the SYT

\Yvcentermath1
\Yboxdim14pt
\def\X{10}\def\XI{11}\def\XII{12}
\[ T=\; \young(1368,249\XI,57\X\XII)\ . \]

\medskip
Going through the numbers 1 through 12 in $T$, we write (left to right) $a$, $b$, or $c$ if the number is on the first, second, or third row of the tableaux, respectively. This gives us the word $w_T = ababcacabcbc$.

Moving now from right to left, we extract the factors of $w_T$ through the reduction
\[ ababcac\cbfit{abc}bc \underset{\cbfit{abc}}{\longrightarrow} ababcacbc 
  = ab\cbfit{abc}acbc \underset{\cbfit{abc}}{\longrightarrow} \cbfit{abacbc}, \]
which gives us the factors $abacbc$, $abc$, and $abc$. We record the length of the most left factor in each reduction step: $\ell_1(ababcac)=7$ and $\ell_2(ab)=2$ and construct a Dyck path
\[ D_T = \U^6 \D^{\ell_2} \U^3 \D^{\ell_1-\ell_2} \U^3 \D^{12-\ell_1} = \U^{6}\D^2 \U^3 \D^5 \U^3 \D^5, \]
where the ascents are labeled (from left to right) by the primitive words $abacbc$, $abc$, and $abc$, see Fig.~\ref{fig:DyckWord}. This process is clearly reversible.
\end{proof}

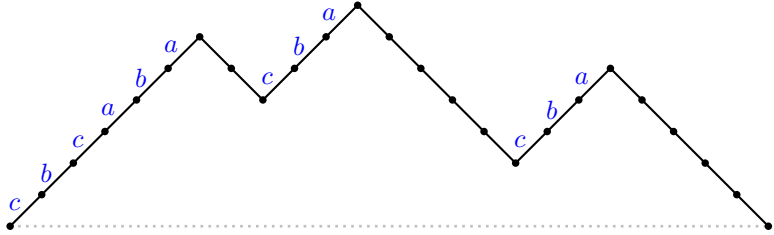
\begin{figure}[ht]
\begin{tikzpicture}[scale=0.4]
\small
\color{color2}
	\node [left=1pt] at (5.2,5.7) {$\mathbfit{a}$};
	\node [left=1pt] at (4.2,4.7) {$\mathbfit{b}$};
	\node [left=1pt] at (3.2,3.7) {$\mathbfit{a}$};
	\node [left=1pt] at (2.2,2.7) {$\mathbfit{c}$};
	\node [left=1pt] at (1.2,1.7) {$\mathbfit{b}$};
	\node [left=1pt] at (0.2,0.7) {$\mathbfit{c}$};
	\node [left=1pt] at (10.2,6.7) {$\mathbfit{a}$};
	\node [left=1pt] at (9.2,5.7) {$\mathbfit{b}$};
	\node [left=1pt] at (8.2,4.7) {$\mathbfit{c}$};
	\node [left=1pt] at (18.2,4.7) {$\mathbfit{a}$};
	\node [left=1pt] at (17.2,3.7) {$\mathbfit{b}$};
	\node [left=1pt] at (16.2,2.7) {$\mathbfit{c}$};
\draw[pathcolorlight] (-0.5,0) -- (23.5,0);
\drawlinedots{-0.5,0.5,1.5,2.5,3.5,4.5,5.5,6.5,7.5,8.5,9.5,10.5,11.5,12.5,13.5,14.5,15.5,16.5,17.5,18.5,19.5,20.5,21.5,22.5,23.5}%
{0,1,2,3,4,5,6,5,4,5,6,7,6,5,4,3,2,3,4,5,4,3,2,1,0}
\end{tikzpicture}
\caption{Labeled Dyck path $D_T$ associated with $w_T = ababcacabcbc$.}
\label{fig:DyckWord}
\end{figure}

\begin{remark}
For $d=3$ this offers a different Dyck path representation from the one given in Proposition~\ref{prop:nnn_syt}.
\end{remark}

\section{Labeled Motzkin paths to SYT}
\label{sec:Motzkin}

In Section~\ref{sec:nxpt} we considered adding extra structure to Dyck paths of length $2n$ to obtain objects equinumerous to SYT with $n$ boxes. In this section, we discuss other equinumerous sets which instead are obtained by adding extra structure to Motzkin paths of length $n$.

\begin{proposition}\label{prop:MotzkinPaths}
The following objects, defined by Motzkin paths of length $n$ with $s$ flat steps and some additional structure, are in bijection with partial matchings on $[n]$ having $s$ singletons and thus also with SYT with $n$ boxes and $s$ odd columns:
\begin{itemize}
\item \emph{Height-labeled Motzkin paths}, where each down-step starting at height $i$ is  given a label from $[i]$.
\item \emph{Full rook Motzkin paths}, which have rooks placed in their lower shape such that there is exactly one in the ``row" beneath each up-step and exactly one in the ``column" beneath each down-step, where ``row" and ``column" refer to the $45^\circ$ rotation.
\item \emph{Yamanouchi-colored Motzkin paths}
which can be defined by their correspondence with weakly oscillating tableaux. Up-steps, down-steps, and flat-steps correspond to adding, removing, or leaving as-is, respectively, and the label specifies the row in which to add or remove a box.
\end{itemize}
\end{proposition}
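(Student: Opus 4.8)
The plan is to prove all three bijections by routing each family through \emph{partial matchings on $[n]$ with $s$ singletons}, and then to invoke the correspondence already assembled in Section~\ref{sec:nxpt}: a partial matching is an involution (arcs are transpositions, singletons are fixed points), and applying RSK to that involution yields an SYT with $n$ boxes whose number of odd columns equals the number of fixed points, hence the number of singletons. Thus it suffices to exhibit, for each of the three families, a bijection to partial matchings on $[n]$ under which the number of flat steps equals the number of singletons. Since each family is built on a Motzkin path of length $n$ whose flat steps are the only feature not contributing an arc endpoint, the flat-step/singleton bookkeeping will be immediate in every case; the work is in describing the pairing data.

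For the height-labeled paths, I would use the classical encoding of matchings by Hermite histories (Flajolet's theory of continued fractions). Reading the path from left to right, declare each up-step an \emph{opener} and each down-step a \emph{closer}; the height just before a step is the number of currently open arcs. A down-step starting at height $i$ must close one of the $i$ currently open arcs, and its label in $[i]$ records which one (say, ordering the open arcs by the position of their openers). Flat steps are singletons. This is a bijection because the height profile forces openers and closers to balance, and reading the labels left to right reconstructs the arcs uniquely; the number of flat steps is the number of singletons by construction.

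For the full rook paths, I would make the $45^\circ$ picture precise: rotating the Motzkin path places a board below it whose cell in the ``row'' beneath up-step $a$ and the ``column'' beneath down-step $b$ is present exactly when $a$ precedes $b$ on the path. A full (non-attacking) rook placement is then a bijection from up-steps to later down-steps, i.e.\ a pairing of each opener with a strictly larger closer, which is precisely an arc system; flat steps carry no rook and are the singletons. Every partial matching arises exactly once: its opener/closer/singleton pattern determines the underlying Motzkin path and its arcs determine the rook placement. (The case of no flat steps recovers the matching counts of \cite{Cal09,Fra78,FV79,Kratt06}.) I expect this geometric identification of the board to be the main point requiring care, although small cases such as $\U\U\D\D$ versus $\U\D\U\D$ already confirm that the rook counts agree with the matching counts.

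Finally, for the Yamanouchi-colored paths, the definition makes each such path literally a \emph{weakly oscillating tableau} of empty shape and length $n$: up, down, and flat steps add, remove, and fix a box, the color is the row label, and the ballot (Yamanouchi) condition is exactly the requirement that every intermediate shape is a partition. I would therefore feed it directly into the oscillating-tableau-to-partial-matching bijection constructed in Section~\ref{sec:nxpt}, under which do-nothing steps correspond to singletons, recovering the flat-step/singleton match; this yields a bijection distinct from the one of Eu et al.\ \cite{E+13}. The only genuine obstacle across the three parts is verifying that these three superficially different labelings are three honest encodings of the same matching data with the singleton statistic preserved; once that is checked, the passage to SYT is immediate from Section~\ref{sec:nxpt}.
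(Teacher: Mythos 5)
Your proposal is correct and takes essentially the same route as the paper: every family is funneled through partial matchings on $[n]$ with flat steps matching singletons, the full-rook and Yamanouchi arguments coincide with the paper's, and the passage to SYT (matchings as involutions plus RSK, with fixed points becoming odd columns) is exactly the machinery of Section~\ref{sec:nxpt}. The only difference is organizational: you encode height-labeled paths directly as Hermite histories (the label names which currently open arc the down-step closes), whereas the paper realizes the same bijection by composing partial matchings $\leftrightarrow$ full rook paths with a rook-to-height-label dictionary.
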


For instance, for the partial matching $(1\,6)(2\,7)(3\,4)(5\,9)(8)$ discussed in Section~\ref{sec:nxpt}, we have the labeled Motzkin paths in Fig.~\ref{fig:rookMotzkin}.

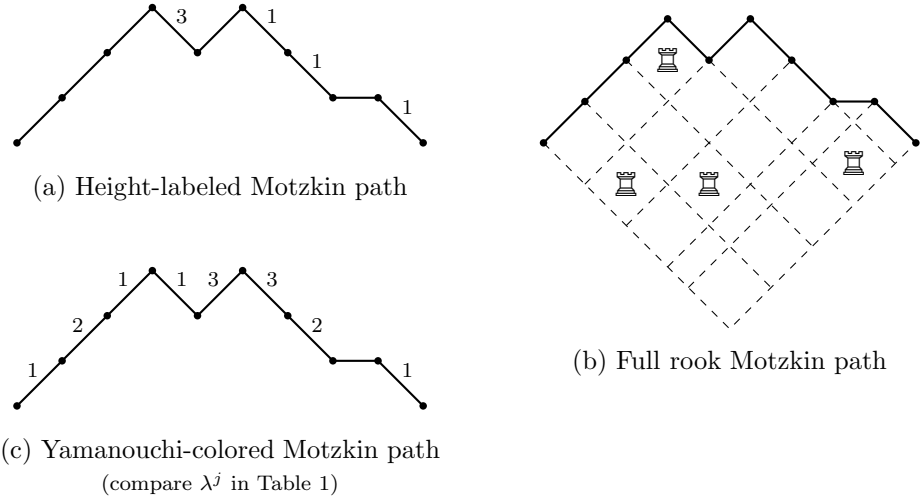
\begin{figure}[ht]
\begin{tikzpicture}
\scriptsize
\begin{scope}[scale=0.5]
\drawlinedots{0,1,2,3,4,5,6,7,8,9}{0,1,2,3,2,3,2,1,1,0}
\draw (3.65,2.8) node {3};
\draw (5.65,2.8) node {1};
\draw (6.65,1.8) node {1};
\draw (8.65,0.8) node {1};
\node at (4.5,-1) {\small (a) Height-labeled Motzkin path};
\end{scope}
\begin{scope}[xshift=60mm, scale=0.5]
\draw[dashed] (0,0) -- (4.5,-4.5) -- (9,0);
\draw[dashed] (1,1) -- (5.5,-3.5);
\draw[dashed] (2,2) -- (6.5,-2.5);
\draw[dashed] (4,2) -- (7.5,-1.5);
\draw[dashed] (7,1) -- (8.5,-0.5);
\draw[dashed] (1,-1) -- (4,2);
\draw[dashed] (2,-2) -- (6,2);
\draw[dashed] (3,-3) -- (7,1);
\draw[dashed] (3.5,-3.5) -- (8,1);
\drawlinedots{0,1,2,3,4,5,6,7,8,9}{0,1,2,3,2,3,2,1,1,0}
\draw (3,2) node[] {\large\rook};
\draw (2,-1) node[] {\large\rook};
\draw (4,-1) node[] {\large\rook};
\draw (7.5,-0.5) node[] {\large\rook};
\node at (4.5,-5.3) {\small (b) Full rook Motzkin path};
\end{scope}
\begin{scope}[yshift=-35mm,scale=0.5]
\drawlinedots{0,1,2,3,4,5,6,7,8,9}{0,1,2,3,2,3,2,1,1,0}
\draw (0.35,0.8) node {1};
\draw (1.35,1.8) node {2};
\draw (2.35,2.8) node {1};
\draw (3.65,2.8) node {1};
\draw (4.35,2.8) node {3};
\draw (5.65,2.8) node {3};
\draw (6.65,1.8) node {2};
\draw (8.65,0.8) node {1};
\node at (4.5,-1) {\small (c) Yamanouchi-colored Motzkin path};
\node[below=5pt] at (4.5,-1) {(compare $\lambda^j$ in Table~\ref{tab:oscillating_bijection})};
\end{scope}
\end{tikzpicture}
\caption{Motzkin paths corresponding to $(1\,6)(2\,7)(3\,4)(5\,9)(8)$.}
\label{fig:rookMotzkin}
\end{figure}

In contrast, the corresponding cm-labeled Dyck path is given in Fig.~\ref{fig:cm-labeled_dyck}.

Before proving Proposition~\ref{prop:MotzkinPaths}, let us put it in context with related results in the literature.  The bijection with height-labeled Motzkin paths is somewhat well known. The other two bijections are simple extensions of the better-known case when $s=0$. Height-labeled Motzkin paths are a case of the \emph{histoires} of orthogonal polynomials. This bijection is due to Fran\c{c}on and Viennot \cite{Fra78,FV79}. In the Dyck path case ($s=0$), height-labeled paths appear in Callan's survey of double factorials~\cite{Cal09} and are also called \emph{Hermite histoires}. Again for the case when $s=0$, full rook Motzkin paths are better known as full rook placements in Ferrers shapes. These were used by Krattenthaler~\cite{Kratt06} to extend the work of Chen et al.~\cite{C+07}. For a reader already familiar with Fomin growth diagrams, full rook Motzkin paths are a simple intermediate step in the bijection between height-labeled and Yamanouchi-colored Motzkin paths. Yamanouchi-colored Motzkin paths were introduced by Eu et al.~\cite{E+13}, who gave a definition and bijection using the language of Motzkin paths.

\begin{proof}[Proof of Proposition~\ref{prop:MotzkinPaths}]
First, there is a simple bijection between partial matchings and full rook Motzkin paths. Each pair $(i,j)$ in the matching with $i<j$ indicates an up-step at step $i$ and a down-step at step $j$. A singleton at $i$ indicates a flat-step at step $i$. We then draw the path from left to right according to these steps and place rooks at the positions determined by the matching, as in Fig.~\ref{fig:rookMotzkin}(b). For the reverse map, simply match the two steps diagonal from each rook, and leave the flats as singletons.

To make the bijection between height-labeled and full rook Motzkin paths easier to state, we use the terms ``row'' and ``column'' for the shape beneath the full rook path by considering the result of rotating it $45^\circ$ counterclockwise. We assign height-labels to each down-step starting at height $i$ (from left to right) according to the height of the rook in the column below, ignoring any rows with a rook in an earlier column. For example, in Fig.~\ref{fig:rookMotzkin}, the first down-step in (a) has label 3 because in (b) the rook is at height 3 in the column beneath this down-step.  A more interesting case is the third column, where the down-step has label 1 because it has a column of four beneath it, but ignoring the rows with the rooks already placed, there are two places available and the rook is in the first.  Observe that the number of places available is always the starting height of the down-step, so we do indeed arrive at a height-labeled Motzkin path. Clearly, this map is easily reversed.
	
Finally, we defined Yamanouchi-colored Motzkin paths by their correspondence with weakly oscillating tableaux, so the bijection with partial matchings is simply the one we have already seen in Section~\ref{sec:nxpt}. 
\end{proof}

\section{Further remarks}
\label{sec:further_remarks}

\subsection{Dyck paths to restricted set partitions}

Many of our connections between Dyck paths and Young tableaux involved either the map $\varphi:\Dyck(n)\to\Part_\nmi(n)$ (as in Sections~\ref{sec:flag_shape} and~\ref{sec:height3}) or the classic bijection from Dyck paths to noncrossing partitions (which appeared implicitly in Section~\ref{sec:nxpt}). We can describe the reverse map for both bijections in the same way, as in Equation~\eqref{eq:Dyck_from_Part}. It is then straightforward to generalize as follows.
	
\begin{proposition}
A map $\Dyck(n) \to \Part(n)$ is injective if, for each $k$-ascent in the Dyck path followed by the $m$th down step, there is a $k$-block of the partition with minimum $m$.
\end{proposition}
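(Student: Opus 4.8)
The proposition claims: a map $\Dyck(n) \to \Part(n)$ is injective provided that for each $k$-ascent in a Dyck path followed by the $m$th down-step, the image partition contains a $k$-block whose minimum element is $m$. The condition is essentially a reconstruction recipe: it tells us, from the partition alone, enough combinatorial data (block sizes paired with their minima) to read off the ascent lengths and their horizontal positions along the path.

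**The plan.** The cleanest way I would prove injectivity is to exhibit a left inverse directly, rather than arguing by contradiction. The key observation is that a Dyck path is completely determined by the ordered sequence of its ascent lengths together with the number of down-steps between consecutive ascents, which is exactly the data encoded in formula~\eqref{eq:Dyck_from_Part}. So first I would fix a Dyck path $D$ with image partition $\pi$, and decode $\pi$ as follows: for each block $B$ of $\pi$, record the pair $(\#B, \min B)$. The hypothesis says precisely that if $D$ has a $k$-ascent whose immediately following down-step is the $m$th down-step (counting all down-steps left to right), then $(k,m)$ occurs among these pairs. Since the down-step numbering $m$ runs over distinct values — each down-step carries a unique index — the minima of the blocks that arise from ascents are distinct, and sorting the relevant blocks by their minima recovers the left-to-right order of the ascents.

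**Carrying it out.** Concretely, I would argue that the multiset of pairs $\{(\#B,\min B)\}$ lets one reconstruct $D$ uniquely. List the ascent-origin blocks in increasing order of their minima $m_1 < m_2 < \cdots < m_\ell$, with corresponding sizes $k_1,\dots,k_\ell$. The value $m_i$ is the index of the down-step following the $i$th ascent, so the number of down-steps strictly between the $i$th and $(i{+}1)$st ascents is $m_{i+1}-m_i-1$, plus one for the down-step closing the $i$th ascent; equivalently, reading off the gaps between consecutive minima gives the descent lengths, exactly as in~\eqref{eq:Dyck_from_Part}. Thus
\[
D = \U^{k_1}\D^{m_1-m_0}\U^{k_2}\D^{m_2-m_1}\cdots\U^{k_\ell}\D^{\,n+1-m_\ell},
\]
(with $m_0$ the index before the first ascent, which is forced) is determined by $\pi$ alone. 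Since $\pi$ determines $D$, the map is injective.

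**Main obstacle.** The conceptual subtlety — and the step I would be most careful about — is confirming that the down-step indices $m$ attached to ascents genuinely serve as \emph{distinct} sortable keys that recover both block order and descent lengths. One must check that "the $m$th down step" refers to a global left-to-right numbering of all down-steps (as the numbering in the definition of $\varphi$ does), so that the minima $m_i$ are strictly increasing in the order of the ascents and each $m_i$ simultaneously encodes a position along the path. Once this bookkeeping is pinned down, the reconstruction is forced and injectivity follows; the bulk of the work is just verifying that the hypothesis supplies exactly the $(\#B,\min B)$ data that~\eqref{eq:Dyck_from_Part} consumes. I would not expect any genuine difficulty beyond this indexing check, since both $\varphi$ and the noncrossing-partition bijection are already known to satisfy the hypothesis, so the proposition is really a uniform abstraction of those two reverse maps.
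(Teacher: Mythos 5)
Your strategy is the one the paper itself has in mind: the paper offers no separate proof of this proposition, treating it as immediate from the observation that the reverse map is \eqref{eq:Dyck_from_Part}, i.e.\ that the pairs $(\#B,\min B)$ read off from the partition determine the path. So your plan --- sort blocks by their minima and rebuild the path from sizes and gaps --- is exactly right in outline.

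Two details in your write-up, however, need repair. First, your displayed formula contradicts your own (correct) prose: the down-steps lying between the $i$th and $(i{+}1)$st ascents are precisely those numbered $m_i, m_i+1, \dots, m_{i+1}-1$, so the descent following the $i$th ascent has length $m_{i+1}-m_i$, and the reconstruction must read
\[
D=\U^{k_1}\D^{m_2-m_1}\U^{k_2}\D^{m_3-m_2}\cdots\U^{k_\ell}\D^{\,n+1-m_\ell},
\]
exactly as in \eqref{eq:Dyck_from_Part}. With your exponents ($\D^{m_1-m_0}$ after $\U^{k_1}$, $\D^{m_2-m_1}$ after $\U^{k_2}$, and so on, taking $m_0=0$) the down-step exponents sum to $n+1-(m_\ell-m_{\ell-1})$ rather than $n$, so in general your word is not even a Dyck path of semilength $n$. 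Second, your reconstruction sorts the ``ascent-origin blocks,'' but from the partition $\pi$ alone one cannot tell which blocks those are unless one first shows that \emph{every} block of $\pi$ is an ascent-origin block; this step is skipped in your argument, and it is needed to equate the data extracted from two paths with the same image. It follows from a one-line count: distinct ascents are followed by distinct down-steps, so the hypothesis pairs them with distinct blocks, and since the ascent lengths sum to $n$, these distinct blocks already have total size $n$ and hence exhaust the blocks of $\pi$. With that line added and the indexing corrected, your proof is complete: the multiset $\{(\#B,\min B)\}$ over all blocks of $\pi$, sorted by minima, is determined by $\pi$ and determines $D$, so two paths with the same image coincide.
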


For example, such a map would take the Dyck path $\U^4\D^3\U^2\D\U\D\U^2\D^4$ to a partition of the form 1$\square\square\square|4\square|5|6\square$, because there are four ascents of sizes 4, 2, 1, and 2 followed by the 1st, 4th, 5th, and 6th down steps, respectively. If we place the remaining elements 23789 into blocks greedily from left to right, we obtain the nomincreasing partition $1237|48|5|69$. If instead we place elements greedily into blocks from right to left, preserving the minimum element of each block, we obtain the partition $1239|48|5|67$. It is not hard to see that these two maps are the map $\varphi$ to nomincreasing partitions and the classic map to noncrossing partitions.	
	
Another way to describe the relationship between these maps is to look at the \textit{front representations} of partitions studied by Kim~\cite{Kim11}. In this case, a block $B=\{a_1,\ldots,a_k\}$ with $a_1<\cdots<a_k$ is associated with the arc diagram $(a_1,a_2),(a_1,a_3),\ldots,(a_1,a_k)$ instead of $(a_1,a_2),(a_2,a_3),\ldots,(a_{k-1},a_k)$. Then, it is not hard to show that noncrossing partitions are exactly the partitions whose front representations are noncrossing, and nomincreasing partitions are exactly the partitions whose front representations are nonnesting.

\subsection{Noncrossing partition transform}
\label{subsec:noncrossing}

As stated in Corollary~\ref{cor:dyck_syt}: 
\begin{center}
\em The number of cm-labeled Dyck paths of semilength $n$ equals $\syt{n}$. 
\end{center}
This result is motivated by the noncrossing partition transform, which naturally relates its output to Dyck paths labeled by combinatorial objects enumerated by the input.

The noncrossing partition transform, as studied by Beissinger~\cite{Bei85} and Callan~\cite{Cal08}, may be defined in terms of partial Bell polynomials as follows:\footnote{The equivalence of this definition with \eqref{eq:d-Catalan} in the case $d=1$ is shown in \cite{BGW18}.} 

For a sequence $(x_n)$, define $(y_n)$ by
\begin{equation}\label{eq:BellFormula}
 y_0=1,\quad y_n = \sum_{k=1}^n \frac{1}{(n-k+1)!} B_{n,k}(1!x_1, 2!x_2, \dots) \ \text{ for } n\ge 1,
\end{equation}
where $B_{n,k}$ denotes the $(n,k)$-th partial Bell polynomial defined as
\begin{equation*}
  B_{n,k}(z_1,\dots,z_{n-k+1})=\sum_{\alpha\in\pi_k(n)} \tfrac{n!}{\alpha_1! \cdots \alpha_{n-k+1}!}\left(\tfrac{z_1}{1!}\right)^{\alpha_1}\cdots \left(\tfrac{z_{n-k+1}}{(n-k+1)!}\right)^{\alpha_{n-k+1}}
\end{equation*}
with $\pi_k(n)$ denoting the set of $(n-k+1)$-part partitions of $k$ such that $\alpha_1 + 2\alpha_2 +\cdots+(n-k+1)\alpha_{n-k+1}=n$.  As shown in \cite{BGMW16}, if $(x_n)$ is a sequence of nonnegative integers, $y_n$ gives the number of Dyck paths of semilength $n$ such that each $j$-ascent may be labeled in $x_j$ different ways. As expected, if $(x_n)$ is the sequence of ones, then $y_n$ gives the sequence of Catalan numbers. In general, $y_n$ enumerates configurations obtained by adorning the ascents with structures whose elements are counted by $(x_n)$. 

Let $a_j$ denote the number of all possible cm-labels for an ascent of length $2j$. This is the number of connected matchings on $[2j]$ and is given by the sequence \oeis{A000699}:
\[
1, 1, 4, 27, 248, 2830, 38232, 593859, \dots.
\] 
Therefore, if we define the sequence $(x_n)$ by 
\begin{gather*}
  x_1=1, \\
  x_{2n+1} = 0 \text{ and }  x_{2n} = a_n \text{ for } n\ge 1,
\end{gather*}
then from Corollary~\ref{cor:dyck_syt} and equation \eqref{eq:BellFormula} we deduce that
\begin{equation}\label{eq:sytBell}
  \syt{n} = \sum_{\ell=1}^n \frac{1}{(n-\ell+1)!}B_{n,\ell}(1!, 2!a_1,0,4!a_2,0, \dots).
\end{equation}

Observe that $\syt{n}$ is a special case of the sequence
\begin{equation}\label{eq:alphaFamily}
  y^{(\alpha)}_n = \sum_{\ell=1}^n \frac{1}{(n-\ell+1)!} B_{n,\ell}(1!\alpha, 2!a_1,0,4!a_2,0, \dots)
\end{equation}
that counts the number of cm-labeled Dyck paths of semilength $n$, where singletons (ascents of length 1) may be colored in $\alpha\in\mathbb{N}_0$ different ways. The case $\alpha=0$ means that no singletons are allowed. In this case, $y^{(0)}_{2n-1}=0$ for all $n\ge 1$ while $y^{(0)}_{2n}$ gives the number of perfect matchings on $[2n]$, which are counted by the double factorials $(2n-1)!!$.

Another interesting instance of \eqref{eq:alphaFamily} is when $\alpha=2$, i.e.\ each singleton may be colored in two ways. In this case, \eqref{eq:alphaFamily} gives the sequence \oeis{A005425} whose $n$th term gives the number of involutions on $[n]$ whose fixed points can each be colored in two different ways.  

\subsection{Generating functions}

Let $A(t)$ be the the generating function for the number of connected matchings on $[2n]$, and let $Y(t)$ be the  corresponding function that enumerates SYT with $n$ boxes. Equation \eqref{eq:sytBell} implies that $Y(t)$ is the noncrossing partition transform of $X(t) = t + A(t^2)$. Thus, in terms of generating functions, this means (cf.\ Callan \cite[\S4]{Cal08})
\[
  tY(t) = \left(\frac{t}{1+X(t)}\right)^{\langle-1\rangle},
\]
where $^{\langle-1\rangle}$ denotes compositional inverse.  In other words,
\begin{equation}\label{eq:Y=NXPtransformX} 
  Y(t) - 1 = X(tY(t)), \text{ or equivalently, }\, (1-t)Y(t) = 1 + A(t^2 Y(t)^2). 
\end{equation}
Further, if $P(t)$ is the generating function for the number of perfect matchings on $[2n]$, then $P(t^2)$ is the noncrossing partition transform of $A(t^2)$, and
\begin{equation*}
  1+ P(t^2) = \frac{1}{t}\left(\frac{t}{1+A(t^2)}\right)^{\langle-1\rangle}.
\end{equation*}
This implies
\begin{equation*}
  \left(t(1+ P(t^2))\right)^{\langle-1\rangle} = \frac{t}{1+A(t^2)} \;\text{ and }\;
  P\left(\frac{t^2}{(1+A(t^2))^2}\right) = A(t^2).
\end{equation*}
Combining this identity with \eqref{eq:Y=NXPtransformX}, we obtain
\begin{equation*}
  P\left(\frac{t^2Y(t)^2}{(1-t)^2Y(t)^2}\right) = A(t^2 Y(t)^2) = (1-t)Y(t) -1,
\end{equation*}
which implies
\begin{equation*}
  Y(t) = \frac{1+P(t^2/(1-t)^2)}{1-t}.
\end{equation*}

While this formula is known \oeis{A001006}, our approach using the noncrossing partition transform gives the same identity when restricted to $k$-noncrossing perfect matchings on $[2n]$ and SYT with $n$ boxes and height at most $2k-1$. In other words, if $P_k(t)$ denotes the generating function for the number of $k$-noncrossing perfect matchings on $[2n]$, and if $Y_k(t)$ enumerates SYT with $n$ boxes and height at most $2k-1$, then
\begin{equation*}
  Y_k(t) = \frac{1+P_k(t^2/(1-t)^2)}{1-t}.
\end{equation*}
This is the elegant expression we promised in the introduction. As stated in the survey \cite{Mis18+}, no explicit expression for the coefficients of $Y_k(t)$ for $k>3$ appears in the literature. For some values of $k$, these sequences are listed in \cite{OEIS} as follows:

\smallskip
\begin{center}
\begin{tabular}{r|c|c} \rule[-1.2ex]{0ex}{4ex}
  $k$ & $k$-noncrossing matchings & SYT of height $\le 2k-1$ \\ \hline
  \rule[0ex]{0ex}{3ex} 2 & \oeislink{A000108} & \oeislink{A001006} \\
  3 & \oeislink{A005700} & \oeislink{A049401} \\
  4 & \oeislink{A136092} & \oeislink{A007578} \\
  5 & \oeislink{A251598} & \oeislink{A212915}
\end{tabular}
\end{center}

Conjecturally, the number of SYT with $n$ boxes and height at most seven is given by
\begin{equation*}
\sum_{\ell=0}^{\lfloor{n/2}\rfloor}\binom{n}{2\ell}
\sum_{j=0}^{\ell+1} \frac{180(2\ell)!}{j!(j+4)!(\ell-j+1)!(\ell-j+3)!}(2\ell-3j+2)C_{j+1}\ .
\end{equation*}


\end{document}